\newcommand{\LC}{\left(}
\newcommand{\RC}{\right)}
\theoremstyle{plain}
\newtheorem{thm}{Theorem}[section]
\newtheorem{prop}{Proposition}[section]
\newtheorem{lem}[prop]{Lemma}
\newtheorem{cor}[prop]{Corollary}
\newtheorem{defi}[prop]{Definition}
\newtheorem{rmk}[prop]{Remark}
\newtheorem*{thm*}{Theorem}
\numberwithin{equation}{section}
\newcommand {\R} {\mathbb{R}} \newcommand {\Z} {\mathbb{Z}}
 \newcommand {\N} {\mathbb{N}}
\newcommand {\p} {\partial}
\newcommand{\eps}{\epsilon}
\newcommand {\supp} {\text{supp}}
\newcommand{\ol}{\overline}
\newcommand{\tbl}{\textcolor{blue}}
\newcommand{\wt}{\widetilde}
\newcommand{\mR}{\mathbb{R}}                    
\newcommand{\abs}[1]{\lvert #1 \rvert}          
\newcommand{\norm}[1]{\left\Vert #1 \right\Vert} 
\newcommand{\re}{\mathrm{Re}}
\newcommand{\im}{\mathrm{Im}}
\newcommand{\ccdot}{\,\cdot\,}
\newcommand{\s}{\hspace{0.5pt}}
\newcommand{\f}[1]{\footnote{\textcolor{blue}{#1}}}
\definecolor{armygreen}{rgb}{0.29, 0.33, 0.13}
\definecolor{ao(english)}{rgb}{0.0, 0.5, 0.0}
\DeclareMathOperator{\Id} {Id}
\title[Inverse problems for fractional power elliptic equations]{Inverse problems for elliptic equations with fractional power type nonlinearities}
\author[Liimatainen]{Tony Liimatainen}
\address{Department of Mathematics and Statistics, University of Jyv\"askyl\"a, Jyv\"askyl\"a, Finland 
\newline\noindent
\noindent Department of Mathematics and Statistics, University of Helsinki, Helsinki, Finland}
\curraddr{}
\email{tony.liimatainen@helsinki.fi}
\author[Lin]{Yi-Hsuan Lin}
\address{Department of Applied Mathematics, National Chiao Tung University, Hsinchu, Taiwan}
\curraddr{}
\email{yihsuanlin3@gmail.com}
\author[Salo]{Mikko Salo}
\address{Department of Mathematics and Statistics, University of Jyv\"askyl\"a, Jyv\"askyl\"a, Finland}
\curraddr{}
\email{mikko.j.salo@jyu.fi}
\author[Tyni]{Teemu Tyni}
\address{Department of Mathematics and Statistics, University of Helsinki, Helsinki, Finland}
\curraddr{}
\email{teemu.tyni@helsinki.fi}
\begin{document}

\maketitle
\begin{abstract}
	
	We study inverse problems for semilinear elliptic equations with fractional power type nonlinearities. Our arguments are based on the higher order linearization method, which helps us to solve inverse problems for certain nonlinear equations in cases where the solution for a corresponding linear equation is not known. By using a fractional order adaptation of this method, we show that the results of \cite{LLLS2019nonlinear, LLLS2019partial} remain valid for general power type nonlinearities. 

		\medskip
	
	\noindent{\bf Keywords.} Inverse boundary value problem, Calder\'on problem, partial data, semilinear elliptic equations, higher order linearization, transversally anisotropic manifold.
	
\end{abstract}

\setcounter{tocdepth}{1}

\tableofcontents

\section{Introduction}\label{Sec 1}
In this work we study inverse problems for semilinear elliptic equations with fractional power type nonlinearities, extending the earlier results in \cite{LLLS2019nonlinear, LLLS2019partial} from integer powers to fractional powers. Here, when we say $r$ is fractional we mean $r\in \R\setminus\Z$. Let $r>1$ be fractional and let $\Omega \subset \R^n$ be a bounded domain with $C^\infty$-smooth boundary $\p \Omega$, for $n \geq 2$.
Consider the semilinear elliptic equation 
\begin{align}\label{Main equation}
	\begin{cases}
	\Delta u + q(x)|u|^{r-1}u=0 & \text{ in }\Omega, \\
	u=f & \text{ on }\p \Omega,
	\end{cases}
\end{align}
where $q\in C^\alpha(\overline{\Omega})$ is a potential function and $C^\alpha$ is the space of $\alpha$-H\"older continuous functions. By assuming a suitable \emph{smallness condition} on the boundary data $f$, one can obtain the well-posedness of the Dirichlet problem \eqref{Main equation} for small solutions (see Section \ref{Sec 2}). One can then define the corresponding \emph{Dirichlet-to-Neumann} (DN) map $\Lambda_q$ of \eqref{Main equation} by 
\begin{align*}
	\Lambda_q: C^{2,\alpha}(\p \Omega)\to C^{1,\alpha}(\p \Omega),\qquad  f \mapsto \left. \p_\nu u _f \right|_{\p \Omega},
\end{align*}
for some $0<\alpha <1$, where $u_f \in C^{2,\alpha}(\overline{\Omega})$ is the unique small solution of \eqref{Main equation}, and $\nu$ is the unit outer normal on $\p \Omega$. We will consider the following problem:

\vspace{2mm}

\noindent $\bullet$ \textbf{Inverse Problem 1:} Determine the potential $q$ from the knowledge of $\Lambda_q$.

\vspace{2mm}

A typical method in the study of inverse boundary value problems for nonlinear elliptic equations was initiated by Isakov \cite{isakov1993uniqueness_parabolic}, where he introduced the first linearization of the given (nonlinear) DN map. More precisely, the first linearization allows one to reduce the nonlinear equations to the linear equations, and one can adapt some known results for the linear equations to solve certain inverse problems for the nonlinear equations. Meanwhile, the second order linearization has been successfully applied in solving inverse problems, see \cite{AYT2017direct,CNV2019reconstruction,KN002,sun1996quasilinear,sun1997inverse}.

Throughout this paper the number $r>1$ is fractional, and the solution $u$ is real valued but may change sign, so it is natural to consider $q(x)|u|^{r-1}u$ instead of $q(x) u^r$ to have well-defined nonlinear term. 
Note also that at least when $n=1$ the case $0<r<1$ would roughly correspond to the second order differential equation $u''=F(u)$, where $F$ is not Lipschitz. In this case, it is well-known that uniqueness of solutions can fail, so the assumption $r>1$ is reasonable. Let us write $r=k+\alpha>1$ for some $k\in \N$ and $\alpha\in (0,1)$ in the rest of this work.

In case of $r=m\in \N$ and nonlinear term $q(x)u^m$, corresponding inverse problems were first investigated in \cite{FO20,LLLS2019nonlinear}, and related problems have been further studied in many works. For example, the articles \cite{LLLS2019partial,KU2019remark,KU2019partial} studied related inverse problems for semilinear elliptic equations with partial data. In \cite{LL2020inverse,lin2020monotonicity,LO2020fractional_lower}, the authors studied inverse problems for fractional semilinear elliptic equations. In \cite{LZ2020partial,krupchyk2020inverse,carstea2020inverse,kian2020partial}, the authors studied partial data inverse problems for the nonlinear magnetic Schr\"odinger and conductivity equations. The nonlinearities in these articles are typically integer power type, or holomorphic in $u$ and $\nabla u$ (i.e.\ sums of integer powers).

The main tool in solving these inverse problems is based on the \emph{higher order linearization} technique, where one introduces extra small parameters for the Dirichlet data to reduce inverse problems for nonlinear elliptic equations into statements involving solutions of simpler linear elliptic equations. In the case of nonlinearity $q(x) u^m$ where $m \in \N$, this just means that we are looking at the $m$th order Fr\'echet derivative of the nonlinear measurement operator. 
For a nonlinearity of fractional order $r = k+\alpha$, we will in some sense need to use the $\alpha$th fractional derivative of the $k$th Fr\'echet derivative instead. A somewhat related method was used in \cite{carstea2020recovery} for a $p$-Laplace type equation. Thanks to the higher order linearization method, one may solve related inverse problems for certain semilinear elliptic equations in cases where the analogous problems for the corresponding linear equations still remain open.

Let us state our first main result to answer Inverse Problem 1:

\begin{thm}[The Calder\'on problem with full data]\label{Main Thm 1}
Let $\Omega \subset \R^n$ be a connected bounded domain with $C^\infty$-smooth boundary $\p \Omega$, for $n \geq 2$. Let $r>1$ be a fractional number, $q_j\in C^\alpha (\overline{\Omega})$ for some $0<\alpha<1$, and $\Lambda_{q_j}$ be the DN map of 
\begin{align*}
\begin{cases}
	\Delta u_j +q_j |u_j|^{r-1}u_j =0 & \text{ in }\Omega, \\
	u_j =f & \text{ on }\p \Omega,
\end{cases}
\end{align*}
for $j=1,2$. Assume that $	\Lambda_{q_1}(f)=\Lambda_{q_2}(f)$, for all $f\in C^{2,\alpha}(\p \Omega)$ with $\norm{f}_{C^{2,\alpha}(\p \Omega)}<\delta$, where $\delta >0$ is a sufficiently small number. Then 
\begin{align*}
 q_1 =q_2 \text{ in }\Omega.
\end{align*}
Moreover, in dimensions $n\geq 3$ the statement holds true if we only assume that $\Lambda_{q_1}(f)=\Lambda_{q_2}(f)$ whenever $\norm{f}_{C^{2,\alpha}(\p \Omega)}<\delta$ and $f \geq 0$.
\end{thm}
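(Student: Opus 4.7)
The plan is to reduce Theorem~\ref{Main Thm 1} to the classical Calder\'on problem for the linear Schr\"odinger operator $\Delta + q$, via a nonlinear integral identity derived from a one-parameter asymptotic expansion of small solutions. First, for $h\in C^{2,\alpha}(\partial\Omega)$ and small $\epsilon > 0$, I would establish
\begin{equation*}
u_{\epsilon h} = \epsilon\, v_h + \epsilon^r\, W_{q,h} + R_\epsilon, \qquad \|R_\epsilon\|_{C^{2,\alpha}(\overline{\Omega})} = o(\epsilon^r) \text{ as } \epsilon\to 0^+,
\end{equation*}
where $v_h$ is harmonic with boundary data $h$ and $W_{q,h}\in C^{2,\alpha}(\overline{\Omega})$ solves $\Delta W_{q,h} = -q|v_h|^{r-1}v_h$ in $\Omega$ with $W_{q,h}|_{\partial\Omega} = 0$. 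The exponent $\epsilon^r$ is forced by the $r$-homogeneity of the nonlinearity; inserting the ansatz into \eqref{Main equation} one sees formally that the remainder satisfies $R_\epsilon = O(\epsilon^{2r-1})$, which is $o(\epsilon^r)$ since $r > 1$. The rigorous expansion is obtained by Schauder estimates together with a contraction argument, using the $C^{k,\alpha}$-regularity of $t \mapsto |t|^{r-1}t$ and the well-posedness framework of Section~\ref{Sec 2}.

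Applying this expansion to the DN map yields
\begin{equation*}
\Lambda_{q}(\epsilon h) = \epsilon\, \partial_\nu v_h + \epsilon^r\, \partial_\nu W_{q,h} + o(\epsilon^r) \quad \text{in $C^{1,\alpha}(\partial\Omega)$}.
\end{equation*}
Since the linear term is independent of $q$, the hypothesis $\Lambda_{q_1}(\epsilon h) = \Lambda_{q_2}(\epsilon h)$, after dividing by $\epsilon^r$ and sending $\epsilon\to 0^+$, gives $\partial_\nu W_{q_1,h} = \partial_\nu W_{q_2,h}$ on $\partial\Omega$. Testing the equation $\Delta(W_{q_1,h}-W_{q_2,h}) = -(q_1-q_2)|v_h|^{r-1}v_h$ against any harmonic $\phi\in C^2(\overline{\Omega})$ and using the vanishing Dirichlet and Neumann traces of the difference produces the nonlinear integral identity
\begin{equation*}
\int_\Omega (q_1-q_2)\, |v_h|^{r-1} v_h\, \phi\, dx = 0
\end{equation*}
for every pair of harmonic $v_h, \phi \in C^2(\overline{\Omega})$ (with $v_h \geq 0$ in the positive-data variant, by the maximum principle).

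To reduce this nonlinear identity to the classical Calder\'on identity, fix a harmonic $v_1\in C^2(\overline{\Omega})$ and a constant $c > \|v_1\|_{L^\infty(\overline{\Omega})}$. For small $s \geq 0$ the function $v_h := c + s v_1$ is strictly positive in $\overline{\Omega}$ and its trace $h = c + sv_1|_{\partial\Omega}$ is strictly positive, so the identity becomes $\int_\Omega (q_1-q_2)(c + s v_1)^r \phi\, dx = 0$. The integrand depends smoothly on $s$, so differentiating at $s = 0$ and dividing by $r c^{r-1}$ yields the classical Calder\'on integral identity
\begin{equation*}
\int_\Omega (q_1-q_2)\, v_1\, \phi\, dx = 0 \quad \text{for all harmonic } v_1, \phi \in C^2(\overline{\Omega}).
\end{equation*}
By $\mathbb{R}$-linearity this extends to complex-valued harmonic $v_1, \phi$, so $q_1 = q_2$ follows from the density of products of pairs of harmonic functions in $L^1(\Omega)$: Sylvester--Uhlmann complex geometric optics solutions in $n\geq 3$ and Bukhgeim's stationary phase method in $n=2$, combined with the H\"older continuity of the $q_j$.

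The main obstacle is the rigorous justification of the expansion with the precise remainder $o(\epsilon^r)$. Because $r = k + \alpha \notin \mathbb{N}$, the nonlinearity $|u|^{r-1}u$ is only $C^{k,\alpha}$ at the origin, so the solution map $f \mapsto u_f$ is not $C^{k+1}$ and the standard higher-order Fr\'echet-derivative technique cannot directly produce the $\epsilon^r$-correction to the DN map. The expansion must therefore be set up by hand, tracking the exact $\epsilon$-dependence of the remainder through Schauder estimates and a contraction argument. This is the sense in which the paper's method effectively replaces the $(k+1)$-th Fr\'echet derivative by a fractional derivative of the $k$-th Fr\'echet derivative, as announced in the introduction.
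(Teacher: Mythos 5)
Your proposal is correct, but it follows a genuinely different route from the paper's proof. The paper proves Theorem \ref{Main Thm 1} through the multiparameter $k$th-order linearization: it invokes the integral identity of Proposition \ref{Prop: derivs_and_integral_formula}/Corollary \ref{Cor: derivs_and_integral_formula}, kills the fractional weight by choosing $v_0=v_3=\cdots=v_{k+1}\equiv 1$, complexifies via Lemma \ref{lemma_complex_real_solutions}, and concludes with Calder\'on's exponentials $e^{(\pm\zeta+i\xi)\cdot x}$ (valid for all $n\geq 2$); the nonnegative-data statement for $n\geq 3$ is handled separately with the point-source harmonics $|x-y|^{2-n}$ and the Riesz-potential uniqueness result of Isakov. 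You instead use the one-parameter expansion $u_{\eps h}=\eps v_h+\eps^r W_{q,h}+o(\eps^r)$ — which the paper itself acknowledges in Remark \ref{Prop: derivs_and_integral_formula}\hspace{-\the\fontdimen2\font}~(the remark following Corollary \ref{Cor: derivs_and_integral_formula}) as a viable alternative for the full-data results, and which is justified exactly as you sketch, via Schauder estimates and the inequality $\bigl||\zeta|^{r-1}\zeta-|\eta|^{r-1}\eta\bigr|\leq C(|\zeta|+|\eta|)^{r-1}|\zeta-\eta|$ — to get the nonlinear identity $\int_\Omega(q_1-q_2)|v_h|^{r-1}v_h\,\phi\,dx=0$, and then you dispose of the fractional weight by a different device: expanding around a large positive constant, $v_h=c+sv_1$, and differentiating the identically vanishing function of $s$ at $s=0$. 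This is a nice trick: it yields the full bilinear Calder\'on identity with $v_1$ of arbitrary sign even when only nonnegative boundary data are allowed (since $c+sv_1>0$), so your argument treats both assertions of the theorem uniformly, avoids the Riesz-potential step, and would in fact give the nonnegative-data conclusion in $n=2$ as well. What the paper's heavier multiparameter machinery buys is that it is needed anyway for the other main theorems (partial data, polyhomogeneous coefficients, the geometric results), whereas your route is more elementary but specific to the full-data statement. Two small remarks: the remainder estimate $\|R_\eps\|_{C^{2,\alpha}}=O(\eps^{2r-1})$ should be written out (it follows the pattern of Lemma \ref{lemma_b_calpha} and the Schauder argument in Proposition \ref{Prop: wellposedness_and_expansion}), and citing Sylvester--Uhlmann and Bukhgeim is more than necessary — Calder\'on's exponential harmonic functions already give density of products of pairs of harmonic functions in every dimension $n\geq 2$.
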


We remark that in certain applications it is natural to consider nonnegative Dirichlet data (see e.g.\ \cite{RenZhang}). Theorem \ref{Main Thm 1} applies in this case when $n \geq 3$. However, the methods for proving the other main theorems in this paper require sign-changing solutions, and we do not know if those results are valid if one only has access to measurements for nonnegative Dirichlet data.

We briefly explain the higher order linearization in the fractional power case. 
Let $(M,g)$ be a compact $C^\infty$ Riemannian manifold with a $C^\infty$ smooth boundary $\p M$. Recall that $\Delta_g$ is the Laplace-Beltrami operator, given in local coordinates by 
\[
\Delta_g u=\frac{1}{\det(g)^{1/2}}\sum_{a,b=1}^n\frac{\p}{\p x_a}\left( \det(g)^{1/2} g^{ab}\frac{\p u}{\p x_b}\right),
\]
where $g=(g_{ab}(x))$ and $g^{-1}=(g^{ab}(x))$. Throughout this work, we assume that $g=(g_{ab})$ is uniformly elliptic.
Let $q \in C^{\alpha}(M)$. In Proposition~\ref{Prop: derivs_and_integral_formula} we will see that by setting the Dirichlet data as 
$$
f=  \eps_0f_0+\ldots+\eps_kf_k 
$$ 
and differentiating the equation \eqref{Main equation} with respect to $\eps'=(\eps_1,\ldots,\eps_k)$ we obtain a new equation
	\begin{equation} \label{laplace_um_equation}
		\Delta_g w^{\eps_0}(x) = - \p_{\eps_1} \cdots \left. \p_{\eps_k} \LC  q(x)\abs{u_f}^{r-1}u_f\RC \right|_{\eps'=0}  \text{ in }M,
	\end{equation}
where $w^{\eps_0}:=\left. \p_{\eps_1} \cdots \p_{\eps_k}u_f\right|_{\eps'=0}$ and $\left. w^{\eps_0}\right|_{\p M}=\left. \eps_0f_0\right|_{\p M}$.

Furthermore, eliminating $\eps_0^{\alpha}$ on the both sides of \eqref{laplace_um_equation}, by taking the limit $\eps_0\to 0$, we get
	\[
	\eps_0^{-\alpha}w^{\eps_0}\to w \text{ in }C^{2,\alpha}(M), \quad \text{ as }\eps_0\to 0,
	\]
	where $w$ solves
	\begin{equation*}
		\Delta_g w= c_r q(x) \mathrm{sgn}(v_0)^{k-1} \abs{v_{0}}^\alpha v_{1} \cdots v_{k} \text{ in $M$}.
	\end{equation*}
Here $c_r$ is the constant given by $c_r=-r(r-1)\cdots (r-(k-1))$, $\mathrm{sgn}(v_0(x))$ is the sign of $v_0(x)$, and the functions $v_{\ell}$ are harmonic in $M$ with the corresponding boundary values $f_\ell$, for $\ell=0,1,\ldots, k$. Moreover, we will multiply this equation by an extra auxiliary harmonic function $v_{k+1}$ in $M$ with its boundary data $\left. v_{k+1}\right|_{\p M}=f_{k+1}$. Now integrating over $M$ and using integration by parts, we see that from the knowledge of the DN map for the equation $\Delta_g u + q(x) \abs{u}^{r-1} u = 0$ in $M$ it is possible to determine the integrals 
\[
c_r \int_M q(x) \mathrm{sgn}(v_0)^{k-1} \abs{v_{0}}^\alpha v_{1} \cdots v_{k+1} \,dV.
\]
It thus suffices to choose the boundary data $f_\ell$ for $\ell=0,1,\ldots , k$, so that $v_0 \neq 0$ in $M$ and the scalar products $v_{1}\cdots v_{k+1}$ become dense in a suitable function space. This recovers the function $q$ (see Sections \ref{Sec 3} and \ref{Sec 4}).

Next we study the Calder\'on problem with partial data for elliptic equations with fractional power type nonlinearities. 
Let $\Omega \subset \R^n$ be a connected bounded domain, and $\Gamma \subset \p \Omega$ be a nonempty relatively open subset.
By using the well-posedness of \eqref{Main equation} (Proposition \ref{Prop: wellposedness_and_expansion}), one can define the corresponding partial DN map $\Lambda_q^\Gamma$ of \eqref{Main equation} by 
\begin{align*}
\Lambda_q^\Gamma: C^{2,\alpha}_0(\Gamma)\to C^{1,\alpha}(\Gamma),\qquad  f \mapsto \left. \p_\nu u _f \right|_{\Gamma},
\end{align*}
for some $0<\alpha <1$, where $u_f \in C^{2,\alpha}(\overline{\Omega})$ is the unique (small) solution of \eqref{Main equation} (see Section \ref{Sec 2}) with $f\in C^{2,\alpha}_0(\Gamma) $. Then our second question is:

   \vspace{2mm}
   
   \noindent $\bullet$ \textbf{Inverse Problem 2:} Determine the potential $q$ from the knowledge of $\Lambda_q^\Gamma$.
   
   \vspace{2mm}

Our second main result is to solve Inverse Problem 2:

\begin{thm}[Partial data]\label{Main Thm 2}
	Let $\Omega \subset \R^n$ be a connected bounded domain with $C^\infty$-smooth boundary $\p \Omega$, for $n \geq 2$, and $\Gamma \subset \p \Omega$ be a nonempty relatively open subset. Let $r>1$ be a fractional number, $q_j\in C^\alpha (\overline{\Omega})$ for some $0<\alpha<1$, and $\Lambda_{q_j}^\Gamma$ be the DN map of 
	\begin{align*}
	\begin{cases}
	\Delta u_j +q_j |u_j|^{r-1}u_j =0 & \text{ in }\Omega, \\
	u_j =f & \text{ on }\p \Omega,
	\end{cases}
	\end{align*}
	for $j=1,2$. If $	\Lambda_{q_1}^\Gamma(f)=\Lambda_{q_2}^\Gamma(f)$, for all $f\in C^{2,\alpha}_0(\Gamma)$ with $\norm{f}_{C^{2,\alpha}_0(\Gamma)}<\delta$, where $\delta >0$ is a sufficiently small number, then
	\begin{align*}
 q_1 =q_2 \text{ in }\Omega.
	\end{align*}
\end{thm}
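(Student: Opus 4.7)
\textbf{The plan} is to follow the higher order linearization strategy outlined in Section~\ref{Sec 1}, but with all boundary data supported in the partial-data set $\Gamma$, thereby reducing Inverse Problem 2 to a density statement for products of harmonic functions with boundary values in $C^{2,\alpha}_0(\Gamma)$.

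First, I would choose Dirichlet data of the form $f = \eps_0 f_0 + \eps_1 f_1 + \cdots + \eps_k f_k$ with each $f_j \in C^{2,\alpha}_0(\Gamma)$ and $|\eps_j|$ small enough that the well-posedness result of Section~\ref{Sec 2} applies for both $q_1$ and $q_2$. Applying $\p_{\eps_1} \cdots \p_{\eps_k}|_{\eps'=0}$, dividing by $\eps_0^\alpha$, and letting $\eps_0 \to 0$ as in \eqref{laplace_um_equation} yields the fractional linearization
\[
\Delta w = c_r\, q(x)\, \sign(v_0)^{k-1} |v_0|^\alpha\, v_1 \cdots v_k \text{ in }\Omega.
\]
Pairing this equation against an auxiliary harmonic function $v_{k+1}$ with boundary data $f_{k+1} \in C^{2,\alpha}_0(\Gamma)$ and integrating by parts, and using that $\Lambda_{q_1}^\Gamma = \Lambda_{q_2}^\Gamma$ forces the normal derivatives of the associated solutions to agree on $\Gamma$ while every $f_j$ vanishes on $\p\Omega\setminus\Gamma$, all boundary contributions cancel. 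Subtracting the identities for $q_1$ and $q_2$ then produces
\[
\int_\Omega (q_1-q_2)(x)\, \sign(v_0)^{k-1} |v_0|^\alpha\, v_1 v_2 \cdots v_{k+1}\, dV = 0
\]
for every choice of harmonic $v_0, v_1, \ldots, v_{k+1}$ whose boundary traces lie in $C^{2,\alpha}_0(\Gamma)$.

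Next I would specialize $v_0$: pick $f_0 \in C^{2,\alpha}_0(\Gamma)$ with $f_0 \geq 0$ and $f_0 \not\equiv 0$, so by the strong maximum principle $v_0 > 0$ throughout $\Omega$. The sign factor disappears, the identity becomes
\[
\int_\Omega (q_1-q_2)(x)\, v_0(x)^\alpha\, v_1 v_2 \cdots v_{k+1}\, dV = 0,
\]
and since $v_0^\alpha$ is a strictly positive continuous weight on $\Omega$, it remains to show that the linear span of products $v_1 v_2 \cdots v_{k+1}$ (over harmonic $v_\ell$ with $v_\ell|_{\p\Omega}\in C^{2,\alpha}_0(\Gamma)$) is dense in a space large enough to force the $C^\alpha$ function $q_1-q_2$ to vanish.

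Since $r>1$ we have $k+1 \geq 2$, so the decisive ingredient is the partial-data density of products of \emph{pairs} of harmonic functions with boundary values in $C^{2,\alpha}_0(\Gamma)$, which holds in all dimensions $n \geq 2$ and is exactly the tool used in \cite{LLLS2019partial} for integer powers. When $k+1 > 2$, I would use Runge-type approximation with boundary data supported in $\Gamma$ to choose $v_3, \ldots, v_{k+1}$ close to the constant $1$ on compact subdomains exhausting $\Omega$, absorb the limits into the continuous weight $v_0^\alpha$, and reduce to the two-factor case. \textbf{The main obstacle} lies in the first step: rigorously establishing the multiparameter fractional linearization, including the existence of the limit $\eps_0^{-\alpha} w^{\eps_0} \to w$ in $C^{2,\alpha}(\overline{\Omega})$ uniformly in the other parameters, and justifying the commutation of this limit with the partial-data pairing on $\Gamma$. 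Once the integral identity above is secured, the density reduction parallels the full-data argument of Theorem~\ref{Main Thm 1}.
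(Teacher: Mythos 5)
Your overall strategy---the multiparameter fractional linearization with all boundary data in $C^{2,\alpha}_0(\Gamma)$, the resulting integral identity, the partial-data density of products of pairs of harmonic functions from \cite{ferreira2009linearized}, and the maximum principle to make auxiliary factors positive---is the same as the paper's, and the linearization step you defer is exactly Corollary \ref{Cor: derivs_and_integral_formula}, already established in Section \ref{Sec 2}, so relying on it is fine. The genuine problem is your treatment of the extra factors $v_3,\ldots,v_{k+1}$ when $k\geq 2$. You propose to choose them, by Runge-type approximation with data supported in $\Gamma$, close to the constant $1$ on compact subdomains exhausting $\Omega$ and then to ``absorb the limits into the weight''. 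Runge approximation only gives convergence on compact subsets of $\Omega$ (in $L^2$, upgraded to locally uniform convergence by interior elliptic estimates); it gives no control of the approximating harmonic functions near $\p\Omega\setminus\Gamma$, where their sup-norms may blow up along the approximating sequence. Consequently, in the integral $\int_\Omega (q_1-q_2)\,v_0^{\alpha}\, v_1 v_2\, v_3^{(j)}\cdots v_{k+1}^{(j)}\,dx$ you cannot pass to the limit $j\to\infty$: there is no uniform integrability or dominating bound for the integrand near the boundary, so the claimed reduction to the two-factor case is not justified as stated.

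The repair is simpler and is what the paper does: do not send $v_3,\ldots,v_{k+1}$ to $1$ at all. Fix nonnegative, not identically vanishing $f_0,f_3,\ldots,f_{k+1}\in C^{2,\alpha}_0(\Gamma)$, so that $v_0,v_3,\ldots,v_{k+1}>0$ in $\Omega$ by the maximum principle, set $F:=(q_1-q_2)\abs{v_0}^{r-1}v_0^{1-k}\,v_3\cdots v_{k+1}\in L^\infty(\Omega)$, and apply the two-factor density of \cite{ferreira2009linearized} to $\int_\Omega F\, v_1 v_2\,dx=0$ to conclude $F\equiv 0$, whence $q_1=q_2$ by positivity of the fixed factors. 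Note also that the density theorem of \cite{ferreira2009linearized} concerns complex-valued harmonic functions, while the integral identity is derived for real-valued solutions, so one needs the elementary real-to-complex reduction of Lemma \ref{lemma_complex_real_solutions}; your sketch omits this minor but necessary step.
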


Moreover, one can consider more general nonlinear terms that are (asymptotic) sums of homogeneous functions. Let $\Omega \subset \R^n$ be a bounded domain with $C^\infty$-smooth boundary $\p \Omega$.

\begin{defi}\label{polyhomogeneous}
Let $r_l$, $l \geq 1$, be real numbers with $1 < r_1 < r_2 < \ldots$, and let $0 < \alpha < 1$. A function $a=a(x,y): \overline{\Omega}\times \R \to \R$ is polyhomogeneous, written 
 \[
 a(x,y) \sim \sum_{l=1}^\infty b_l (x,y),
 \]
 if each $b_l(\,\cdot\,,y) \in C^{\alpha}(\ol{\Omega})$ is positively homogeneous of degree $r_l$ with respect to the $y$-variable, and if for any $N \geq 1$ there is $C_N > 0$ so that the function $\beta_N := a - \displaystyle\sum_{l=1}^{N-1} b_l$ (with $\beta_1 = a$) is in $C^{1,\alpha}_{\mathrm{loc}}(\mR, C^{\alpha}(\ol{\Omega}))$ and satisfies 
 \begin{equation}\label{beta_norm_bound}
 \norm{\beta_N(\,\cdot\,,y)}_{C^{\alpha}(\ol{\Omega})} + \abs{y} \norm{\p_y \beta_N(\,\cdot\,,y)}_{C^{\alpha}(\ol{\Omega})} \leq C_N \abs{y}^{r_N}, \qquad \abs{y} \leq 1.
 \end{equation}
We will assume that $1+\alpha \leq r_1$ (this can be arranged by decreasing $\alpha$).
\end{defi}

Note that the above definition (using $N=1$) implies that 
\begin{equation} \label{phg_zero}
a(x,0) = \p_y a(x,0) = 0.
\end{equation}
A typical example of polyhomogeneous function $a(x,y)$ is a finite sum 
\[
a(x,y)=\sum_{l=1}^m q_l(x) f_l(y),
\]
where $q_l(x)\in C^{\alpha}(\overline{\Omega})$ and $f_l(y)$ is positively homogeneous of degree $r_l$, i.e. $f_l(\lambda y) = \lambda^{r_l} f_l(y)$ for $y \in \mR$ and $\lambda > 0$. One could also consider infinite sums of this type. In fact, functions $a(x,y)$ that are $C^{\alpha}$ in $x$, holomorphic or antiholomorphic in $y$, and satisfy \eqref{phg_zero} are polyhomogeneous with $r_l = l+1$ just by using Taylor expansions. It is worth  emphasizing that since we are always considering small solutions, only the behaviour for small $\abs{y}$ plays a role.

We also mention that the function $f(y) = |y|^{r-1}y$ , at least roughly speaking, encompasses all positively homogeneous functions. Indeed, if $f$ is positively homogeneous of degree $r>0$, then $f$ is of the form
\begin{align*}
	f(y)=
\begin{cases}
	y^r f(1), &\text{ if }y\geq 0,\\
	f(-|y|)=|y|^r f(-1), & \text{ if }y<0.
\end{cases}
\end{align*}
The case $f(y) = |y|^{r-1}y$ is obtained by taking $f(1)=1$ and $f(-1)=-1$. This computation also shows that if $r = k+\alpha$ where $k \geq 1$ and $\alpha \in (0,1)$, then $f(y)$ is $C^k$ and $f^{(k)}(y)$ is $C^{\alpha}$.

Let us consider the following Dirichlet problem in a bounded smooth domain $ \Omega \subset \R^n$
\begin{align}\label{Dirichlet problem for general coefficients}
	\begin{cases}
		\Delta u + a(x,u)=0 &\text{ in }\Omega, \\
		u=f &\text{ on }\p \Omega,
	\end{cases}
\end{align}
where $a=a(x,y)$ is a polyhomogeneous function given by Definition \ref{polyhomogeneous}.
By Proposition \ref{Prop: wellposedness_and_expansion}, for any sufficiently small Dirichlet data $f\in C^{2,\alpha}_0(\Gamma)$ with $\Gamma \subset \p \Omega$, one can define the corresponding (partial) DN map via 
\begin{align*}
	\Lambda_a^\Gamma: C^{2,\alpha}_0(\Gamma)\to C^{1,\alpha}(\Gamma),\qquad  f \mapsto \left. \p_\nu u _f \right|_{\Gamma},
\end{align*}
for some $0<\alpha <1$, where $u_f \in C^{2,\alpha}(\overline{\Omega})$ is the unique small solution of \eqref{Dirichlet problem for general coefficients}.
The inverse problem is to determine the unknown function $a(x,y)$.

\begin{thm}[Partial data for general coefficients]\label{Main Thm 3}
Let $\Omega \subset \R^n$ be a connected bounded domain with $C^\infty$-smooth boundary $\p \Omega$, for $n \geq 2$, and $\Gamma \subset \p \Omega$ be a nonempty relatively open subset. Let us consider the equations 
	\begin{align}\label{eq:sum_of_bj}
		\Delta u +a_j (x,u)=0 \text{ in }\Omega,
	\end{align}
	for $j=1,2$, where $a_j(x,y) \sim \displaystyle \sum_{l=1}^\infty b_{j,l}(x,y)$ is polyhomogeneous in the sense of Definition~\ref{polyhomogeneous} where the orders $1 < r_1 < r_2 < \ldots$ are the same for $j=1,2$. Let $\Lambda_{a_j}^\Gamma:C^{2,\alpha}_0(\Gamma)\to C^{1,\alpha}(\Gamma)$ be the (partial) DN maps of~\eqref{eq:sum_of_bj}, for $j=1,2$.  Assume that 
	\[
	\Lambda_{a_1}^\Gamma(f)=\Lambda_{a_2}^\Gamma(f) ,
	\]
	for all $f\in C^{2,\alpha}_0(\Gamma)$ with $\norm{f}_{C^{2,\alpha}_0(\Gamma)}<\delta$, where $\delta>0$ is a sufficiently small number. Then we have
	\begin{align*}
		b_{1,l}(x,y)  = b_{2,l}(x,y), \quad \text{ for } x\in \Omega, \ y\in \R \text{ and } l \in \N .
	\end{align*}
	In particular, if $b_{j,l}$ is of the form $b_{j,l}(x,y)=q_{j,l}\abs{y}^{r_l-1}y$, where $q_{j,l}(x)\in C^{\alpha}(\overline{\Omega})$, then 
	\[
	 q_{1,l}(x)=q_{2,l}(x) \text{ in }\Omega, \quad  \text{ for } l \in \N.
	\]
\end{thm}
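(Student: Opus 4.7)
The plan is to proceed by induction on $l\geq 1$, recovering each homogeneous component $b_{j,l}$ in order of increasing $r_l$. The strategy reuses the higher-order linearization already developed for Theorem \ref{Main Thm 2}; the key new observation is that, for polyhomogeneous nonlinearities, the remainder $\beta_{j,l+1}$ decays strictly faster in $y$ than the component $b_{j,l}$ one wishes to extract, so lower-order terms (equal by the inductive hypothesis) cancel out and higher-order terms become negligible in the appropriate limit.

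For the inductive step at level $l$, assume $b_{1,k}=b_{2,k}$ for all $k<l$. Then
\[
a_1(x,u) - a_2(x,u) = \bigl(b_{1,l}(x,u)-b_{2,l}(x,u)\bigr) + \bigl(\beta_{1,l+1}(x,u)-\beta_{2,l+1}(x,u)\bigr),
\]
where $\norm{\beta_{j,l+1}(\,\cdot\,,y)}_{C^\alpha(\overline{\Omega})}\leq C|y|^{r_{l+1}}$ for $|y|\leq 1$ by Definition \ref{polyhomogeneous}. Write $r_l = k_l+\alpha_l$ with $k_l\in\N$ and $\alpha_l\in[0,1)$, and impose boundary data $f=\sum_{i=0}^{k_l}\eps_i f_i$ with $f_i\in C^{2,\alpha}_0(\Gamma)$. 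I would apply $\p_{\eps_1}\cdots\p_{\eps_{k_l}}\big|_{\eps'=0}$ to \eqref{Dirichlet problem for general coefficients}, producing an analogue of \eqref{laplace_um_equation}; then, if $\alpha_l\in(0,1)$, divide by $\eps_0^{\alpha_l}$ and let $\eps_0\to 0$ (if $\alpha_l=0$, simply set $\eps_0=0$). Positive homogeneity of $b_{j,l}$ of degree $r_l$ produces a surviving term of size $\eps_0^{\alpha_l}\eps_1\cdots\eps_{k_l}$, whereas $\beta_{j,l+1}$ contributes only at the strictly higher order $\eps_0^{\alpha_l+(r_{l+1}-r_l)}$ and drops out. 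Multiplying by an auxiliary harmonic function $v_{k_l+1}$ with boundary data in $C^{2,\alpha}_0(\Gamma)$ and integrating by parts, the equality of partial DN maps then yields an integral identity of the form
\[
\int_\Omega D_l\bigl(x, v_0(x)\bigr)\, v_1(x)\cdots v_{k_l+1}(x)\,dV = 0,
\]
where $D_l(x,y)$ is, up to a universal constant, the $k_l$-th $y$-derivative of $b_{1,l}(x,y)-b_{2,l}(x,y)$, and the $v_i$ are arbitrary harmonic functions in $\Omega$ with boundary data in $C^{2,\alpha}_0(\Gamma)$.

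To conclude, I would invoke the density of products of such harmonic functions established in the proof of Theorem \ref{Main Thm 2}, together with varying the sign of $v_0$ to probe the positive and negative branches of the positively homogeneous function separately. This forces $b_{1,l}(x,y)=b_{2,l}(x,y)$ pointwise on $\overline{\Omega}\times\R$, closing the induction. In the special case $b_{j,l}(x,y)=q_{j,l}(x)|y|^{r_l-1}y$, this immediately gives $q_{1,l}=q_{2,l}$ in $\Omega$.

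The main obstacle is the book-keeping needed to confirm that both the remainder $\beta_{j,l+1}$ and the nonlinear cross terms in the asymptotic expansion of $u_f$ genuinely vanish in the $\eps_0\to 0$ limit after division by $\eps_0^{\alpha_l}$; this requires combining the estimate \eqref{beta_norm_bound} with a quantitative version of the well-posedness Proposition \ref{Prop: wellposedness_and_expansion}. A secondary point is that the cases $\alpha_l\in(0,1)$ and $\alpha_l=0$ (integer $r_l$) must be handled in parallel, but the integer case reduces to the standard $k_l$-th Fr\'echet derivative linearization already present in \cite{LLLS2019nonlinear, LLLS2019partial}.
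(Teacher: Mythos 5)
There is a genuine gap in your plan, and it lies exactly where you locate the ``main obstacle'': the $k_l$-th order linearization you want to use for the $l$-th step is in general not available under Definition~\ref{polyhomogeneous}. That definition only requires each remainder $\beta_N$ (and in particular $a$ itself, taking $N=1$) to lie in $C^{1,\alpha}_{\mathrm{loc}}(\mR, C^{\alpha}(\ol{\Omega}))$, and a concrete polyhomogeneous nonlinearity such as $a=q_1|y|^{r_1-1}y+q_2|y|^{r_2-1}y$ with $r_1<2<r_2$ is not $C^{k_2}$ in $y$ at $y=0$: the lowest-order fractional term caps the regularity. Since the small solutions $u_f$ take values arbitrarily close to $0$ (they vanish on $\p\Omega\setminus\Gamma$), Proposition~\ref{Prop: wellposedness_and_expansion} then only gives a $C^{1}$ (more generally $C^{k_1}$-type) Fr\'echet differentiable solution map, so the mixed derivative $\p_{\eps_1}\cdots\p_{\eps_{k_l}}u_f|_{\eps'=0}$ you need for $l\geq 2$ need not exist. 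A second, independent problem is your claim that $\beta_{j,l+1}$ ``contributes only at strictly higher order and drops out'': after $k_l$-fold differentiation in $\eps'$ this would require bounds on $\p_y^{m}\beta_{j,l+1}$ for $m$ up to $k_l$ (via Fa\`a di Bruno), whereas \eqref{beta_norm_bound} controls only $\beta_N$ and $\p_y\beta_N$. So both the existence of your linearized identity and the vanishing of the remainder are unjustified from the hypotheses of the theorem.

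The paper's proof sidesteps both issues by never differentiating more than once in the new parameter, regardless of $r_l$. One takes data $\eps_0 f_0+\eps_1 f_1$, forms only the first linearization $v^{\eps_0}_{j,1}$ (an equation with potential $\p_y a_j(x,u_j|_{\eps_1=0})$), and integrates against a harmonic $v_2$. The separation of the orders $r_l$ is then achieved not by higher differentiation but by scaling in the base point: since $u_j|_{\eps_1=0}=\eps_0 v_0+o(\eps_0)$ in $C^{2,\alpha}$, multiplying the resulting identity by $\eps_0^{-(r_l-1)}$ (not $\eps_0^{-\alpha_l}$) and letting $\eps_0\to 0$ picks out $\p_y b_{j,l}(x,v_0)$ by homogeneity of degree $r_l-1$, with lower terms cancelling via the induction hypothesis and the remainder handled by the first-derivative bound in \eqref{beta_norm_bound} — precisely the only bound available. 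The conclusion then uses density of products $v_1v_2$ of harmonic functions from \cite{ferreira2009linearized}, Runge approximation to prescribe $v_0(x_0)=y_0$ for \emph{arbitrary} $y_0\in\R$ (this already probes both branches of the positively homogeneous functions, making your sign-varying step unnecessary), and Euler's homogeneous function theorem to pass from $\p_y b_{j,l}$ back to $b_{j,l}$. If you want to keep the spirit of your proposal, you would have to restrict to nonlinearities regular enough in $y$ for the higher linearizations to exist and strengthen \eqref{beta_norm_bound} to higher $y$-derivatives; as stated, the theorem's hypotheses do not support your route.
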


Theorem~\ref{Main Thm 3} corresponds to the recovery of the coefficients of the asymptotic series expansion of $a(x,y)$ in the $y$-variable. Note that numbers $r_1,r_2,\ldots$ could also be integers $\geq 2$. Therefore, we can regard Theorem \ref{Main Thm 3} as a generalization of the corresponding Euclidean results in \cite{LLLS2019nonlinear, LLLS2019partial}.

Inspired by the partial data results of inverse problems for semilinear elliptic equations \cite{LLLS2019partial,KU2019partial}, one can also consider the inverse boundary value problem of recovering an obstacle and coefficients simultaneously. Let $\Omega \subset\R^n$ be a bounded domain with a connected $C^\infty$-smooth boundary $\p \Omega$. Let $D\Subset \Omega$ be an open set with $C^\infty$-smooth boundary $\p D$ such that $\Omega \setminus \overline{D}$ is connected. Consider the boundary value problem 
\begin{align}\label{Main equation with obstacle}
\begin{cases}
\Delta u + a(x,u)=0 &\text{ in }\Omega \setminus \overline{D} ,\\
u=0 &\text{ on }\p D, \\
u=f &\text{ on }\p \Omega,
\end{cases}
\end{align}
where $a=a(x,y)$ is a polyhomogeneous function defined via Definition \ref{polyhomogeneous}, for $x\in \Omega\setminus \overline{D}$.

As shown in Proposition \ref{Prop: wellposedness_and_expansion}, given any Dirichlet data $f\in C^{2,\alpha}(\p \Omega)$ with $\norm{f}_{C^{2,\alpha}(\p \Omega)}<\delta$, for some sufficiently small number $\delta>0$, the equation \eqref{Main equation with obstacle} is well-posed and  admits a unique (small) solution $u\in C^{2,\alpha}(\overline{\Omega}\setminus D)$. Let $\Gamma \subset \p \Omega$ be an arbitrarily nonempty relatively open subset, then we can define the corresponding partial DN map $\Lambda_{a,D}^\Gamma$ by 
\begin{align*}
\Lambda_{a,D}^\Gamma : C^{2,\alpha}(\Gamma)\to C^{1,\alpha}(\Gamma), \qquad f\mapsto \left. \p_\nu u_f\right|_{\Gamma},
\end{align*}
for any $f\in C^{2,\alpha}_0(\Gamma)$ with sufficiently small $\norm{f}_{C^{2,\alpha}_0 (\Gamma)}$, where $u_f\in C^{2,\alpha}(\overline{\Omega}\setminus D)$ is the unique solution of \eqref{Main equation with obstacle}.
The following result is analogous to \cite[Theorem 1.2]{LLLS2019partial} and \cite[Theorem 1.6]{KU2019partial}.

\begin{thm}[Simultaneous recovery: Unknown obstacle and coefficient]\label{Main Thm 4}
	Let $\Omega \subset \R^n$, $n\geq 2$ be a bounded connected domain with connected $C^\infty$ boundary $\p \Omega$. Let $D_1, D_2\Subset  \Omega$ be nonempty open subsets with $C^\infty$ boundaries such that $\Omega \setminus \overline{D_j}$ are connected. For $j=1,2$,
	let $a_j=a_j(x,y)$ be polyhomogeneous functions in $y\in \R$, for $x\in \overline{\Omega}\setminus D_j$.
	Denote by $\Lambda_{a_j ,D_j}^{\Gamma}$ the partial DN maps of the following Dirichlet problems 
	\begin{align*}
	\begin{cases}
	\Delta u_j +a_{j}(x,u_j)=0 & \text{ in }\Omega \setminus \overline{D_j}, \\
	u_j =0 & \text{ on }\p D_j,\\
	u_j =f & \text{ on }\p \Omega
	\end{cases}
	\end{align*}
	defined for any $f\in C^{2,\alpha}_0(\Gamma)$ with $\norm{f}_{C^{2,\alpha}_0(\Gamma)}<\delta$, where $\delta>0$ is a sufficiently small number.
	Assume that 
	\[
	\Lambda_{a_1, D_1}^{\Gamma}(f)= \Lambda_{a_2, D_2}^{\Gamma}(f), \text{ for any } \norm{f}_{C^{2,\alpha}_0(\Gamma)}<\delta. 
	\] 
	Then 
	\[
	D:=D_1 = D_2,
	\]
	and 
	\begin{align*}
		b_{1,l}(x,y)  = b_{2,l}(x,y), \quad \text{ for } x\in \Omega\setminus\overline{D}, \ y\in \R \text{ and } l\in \N.
	\end{align*}
\end{thm}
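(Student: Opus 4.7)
The plan is to follow the template of \cite[Theorem 1.2]{LLLS2019partial} and \cite[Theorem 1.6]{KU2019partial}: first use the first-order linearization to reduce obstacle recovery to a Cauchy-type question for harmonic functions on $\Omega \setminus \overline{D_j}$, and then, once $D_1 = D_2 =: D$, apply the fractional higher-order linearization machinery of Theorem \ref{Main Thm 3} on the perforated domain $\Omega \setminus \overline{D}$ to recover the polyhomogeneous coefficients.

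For the obstacle step, I fix $f_0 \in C^{2,\alpha}_0(\Gamma)$ of small norm and write $f = \epsilon f_0$. By the vanishing property $a_j(x,0) = \p_y a_j(x,0) = 0$ from \eqref{phg_zero} and Proposition \ref{Prop: wellposedness_and_expansion}, the small solution of \eqref{Main equation with obstacle} on $\Omega \setminus \overline{D_j}$ has the expansion $u_{j,\epsilon} = \epsilon v_j + O(\epsilon^{r_1})$ in $C^{2,\alpha}(\overline{\Omega \setminus D_j})$, where $v_j$ is harmonic in $\Omega \setminus \overline{D_j}$ with $v_j|_{\p D_j} = 0$ and $v_j|_{\p \Omega} = f_0$. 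Since $r_1 > 1$, dividing $\Lambda_{a_1,D_1}^\Gamma(\epsilon f_0) = \Lambda_{a_2,D_2}^\Gamma(\epsilon f_0)$ by $\epsilon$ and letting $\epsilon \to 0$ yields $\p_\nu v_1 = \p_\nu v_2$ on $\Gamma$. Together with $v_1 = v_2 = f_0$ on $\Gamma$, unique continuation gives $v_1 \equiv v_2$ on the connected component $G$ of $\Omega \setminus \overline{D_1 \cup D_2}$ meeting $\Gamma$. If $D_1 \neq D_2$, the connectedness hypotheses on the $\Omega \setminus \overline{D_j}$ produce a point $x_0 \in \p D_1 \cap G$ with $x_0 \notin \overline{D_2}$ (or with the roles of $D_1$ and $D_2$ swapped); then $v_1(x_0) = 0$, hence $v_2(x_0) = 0$, while a Runge approximation for harmonic functions on $\Omega \setminus \overline{D_2}$ vanishing on $\p D_2$ with boundary values in $\Gamma$ lets me choose $f_0$ so that $v_2(x_0) \neq 0$, a contradiction. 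Hence $D := D_1 = D_2$.

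With $D_1 = D_2 = D$, both problems are posed on $\Omega \setminus \overline{D}$ with common zero Dirichlet data on $\p D$, and I run the fractional higher-order linearization of Proposition \ref{Prop: derivs_and_integral_formula} inductively in $l$. Writing $r_l = k_l + \alpha_l$ with $k_l \in \N$ and $\alpha_l \in (0,1)$, taking $f = \epsilon_0 f_0 + \cdots + \epsilon_{k_l} f_{k_l}$ supported in $\Gamma$, applying $\p_{\epsilon_1} \cdots \p_{\epsilon_{k_l}}|_{\epsilon' = 0}$, normalizing by $\epsilon_0^{\alpha_l}$, letting $\epsilon_0 \to 0$, and pairing with an extra harmonic test function $v_{k_l+1}$ produces, after the inductively established equalities $b_{1,m} = b_{2,m}$ for $m < l$ cancel the lower-order contributions, the integral identity
\begin{equation*}
\int_{\Omega \setminus \overline{D}} \bigl(b_{1,l}(x,v_0(x)) - b_{2,l}(x,v_0(x))\bigr)\, \mathrm{sgn}(v_0)^{k_l-1}\, v_1 \cdots v_{k_l+1}\, dV = 0,
\end{equation*}
valid for all $(k_l+2)$-tuples of functions harmonic in $\Omega \setminus \overline{D}$, vanishing on $\p D$, and with boundary traces on $\p \Omega$ supported in $\Gamma$. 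A Runge approximation in this obstacle setting makes the products $v_1 \cdots v_{k_l+1}$ dense in a suitable function space, giving $b_{1,l}(\,\cdot\,,v_0) = b_{2,l}(\,\cdot\,,v_0)$; varying the sign of $v_0$ and invoking positive homogeneity of degree $r_l$ in $y$ then yields $b_{1,l} \equiv b_{2,l}$ on $\Omega \setminus \overline{D}$.

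The principal obstacle is the Runge approximation on the perforated domain with two-sided boundary constraints (vanishing on $\p D$, support in $\Gamma$ on $\p \Omega$), used both for ruling out $D_1 \neq D_2$ and for concluding density of products in the integral identity. This is the analog for the obstacle setting of the partial-data density used in Theorems \ref{Main Thm 2}--\ref{Main Thm 3} and follows from a Hahn--Banach / unique-continuation argument in which test distributions are extended by zero across $\p \Omega \setminus \overline{\Gamma}$ and across $\p D$; the connectedness of $\Omega \setminus \overline{D}$ is exactly what makes this extension permissible. Once these density statements are in hand, the fractional higher-order linearization machinery of Theorem \ref{Main Thm 3} transfers verbatim to the perforated domain.
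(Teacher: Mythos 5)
Your obstacle step is essentially the paper's: the first linearization at zero boundary data produces harmonic functions vanishing on $\p D_j$ with coinciding Cauchy data on $\Gamma$, unique continuation gives equality on the component $G$ reaching $\p\Omega$, and a boundary point of one obstacle lying inside the complement of the other (this needs the topological fact the paper imports as \cite[Lemma A.3]{LLLS2019partial}, which you assert rather than prove) yields a contradiction; your use of Runge approximation to get $v_2(x_0)\neq 0$ is an admissible variant of the paper's simpler choice $f_0\geq 0$, $f_0\not\equiv 0$ plus the strong maximum principle.

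The coefficient step, however, does not go through as you describe it. After $D_1=D_2=D$ the paper does not redo any linearization: since every $f\in C^{2,\alpha}_0(\Gamma)$ automatically vanishes on $\p D$, the obstacle DN maps coincide with the partial DN maps of Theorem~\ref{Main Thm 3} on the connected smooth domain $\Omega\setminus\overline{D}$ with $\Gamma\subset\p(\Omega\setminus\overline{D})$, and Theorem~\ref{Main Thm 3} is invoked verbatim. Your re-derivation instead takes a $k_l$-th order linearization $\p_{\eps_1}\cdots\p_{\eps_{k_l}}$, and this fails for general polyhomogeneous nonlinearities: Definition~\ref{polyhomogeneous} only guarantees $a_j(x,\cdot)\in C^{1,\alpha}_{\mathrm{loc}}$ in $y$ (each $\beta_N$, in particular $\beta_1=a_j$, is only $C^{1,\alpha}$ with the bound \eqref{beta_norm_bound}), so Proposition~\ref{Prop: wellposedness_and_expansion} yields only a $C^1$ solution map and the higher Fr\'echet derivatives you differentiate do not exist in general; moreover $r_l$ may be an integer, so the decomposition $r_l=k_l+\alpha_l$ with $\alpha_l\in(0,1)$ need not hold. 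Relatedly, your displayed identity has the wrong integrand: for a general positively homogeneous $b_{j,l}$ the object produced by such a scheme would be $\p_y^{k_l}b_{j,l}(x,v_0)$, not $\bigl(b_{1,l}-b_{2,l}\bigr)(x,v_0)\,\mathrm{sgn}(v_0)^{k_l-1}$, and the density of products of $k_l+1$ constrained harmonic functions that you appeal to is neither needed nor available; the actual proof of Theorem~\ref{Main Thm 3} uses only the first linearization in $\eps_1$, the scaling limit $\eps_0^{-r_l+1}\p_y a_j(x,u_j|_{\eps_1=0})\to\p_y b_{j,l}(x,v_0)$, density of products of \emph{pairs} of harmonic functions from \cite{ferreira2009linearized}, Runge approximation to prescribe $v_0(x_0)=y_0$, and Euler's homogeneous function theorem to pass from $\p_y b_{j,l}$ to $b_{j,l}$. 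The repair is immediate: replace your second step by a direct application of Theorem~\ref{Main Thm 3} on $\Omega\setminus\overline{D}$.
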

\begin{rmk}
	It is worth emphasizing that the simultaneous recovery of an embedded obstacle and the surrounding potentials in the linear setting, for example, the linear Schr\"odinger equation (i.e., for the case $r=1$ in Theorem \ref{Main Thm 4}) is an open problem. We refer readers to \cite{isakov1990inverse,LLLS2019partial} for further discussions and \cite{CLL2017simultaneously} for arguments in a linear nonlocal setting.
\end{rmk}

The proof of Theorem \ref{Main Thm 4} is similar to the proof of Theorem \ref{Main Thm 3}, and the only difference is that we need to recover the unknown obstacle first. The method to recover the unknown obstacle has been investigated in \cite[Theorem 1.2]{LLLS2019partial}. We will give the proof in Section \ref{Sec 4}.

We are also able to extend the geometric results in \cite{LLLS2019nonlinear} to fractional power type nonlinearities. We refer to~\cite{LLLS2019nonlinear} for the introduction of these problems.

\begin{thm}[Simultaneous recovery  of metric and potential in the plane]\label{Main Thm 5}
	Let  $(M_1,g_1)$ and $(M_2,g_2)$ be two compact connected $C^\infty$ Riemannian manifolds with mutual $C^\infty$ boundary $\p M$ and $\dim (M_1)=\dim (M_2) =2$. For $j=1,2$, let $\Lambda_{M_j,g_j,q_j}$ be the DN maps of 
	\begin{align}\label{2Deq}
	\Delta_{g_j} u +q_j \abs{u}^{r-1}u=0  \text{ in }M_j,
	\end{align}
	where  $r>1$ is a fractional number.
	Let $0<\alpha<1$ and assume that 
	$$\Lambda_{M_1,g_1,q_1}  (f)=\Lambda_{M_2,g_2,q_2} (f) \text{ on }\p M, $$ for any $f\in C^{2,\alpha}(\p M)$ with $\norm{f}_{C^{2,\alpha}(\p M)}\leq \delta$, where $\delta >0$ is a sufficiently small number. Then:
	\begin{itemize}
		\item[(1)] There exists a conformal diffeomorphism $J:M_1\to M_2$ and a positive smooth function $\sigma\in C^\infty(M_1)$ such that 
		\begin{equation*}
		\sigma J^*g_2 = g_1 \text{ in }M_1,
		\end{equation*}
		with $J|_{\p M}=\Id$ and $\sigma|_{\p M} =1$.

		\item[(2)] Moreover, one can also recover the potential up to a natural gauge invariance in the sense that
		\[
		\sigma q_1 = q_2 \circ J \text{ in }M_1.
		\]
	\end{itemize}
	\end{thm}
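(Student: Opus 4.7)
The plan is to combine (i) the classical two-dimensional result that the DN map of the Laplace--Beltrami operator determines a Riemannian surface with boundary up to conformal diffeomorphism fixing the boundary, with (ii) the fractional higher-order linearization technique described in the introduction. First I would extract the linear Laplacian DN map from the nonlinear one: since $r>1$, both $q|u|^{r-1}u$ and its first derivative vanish at $u=0$, so differentiating $\Lambda_{M_j,g_j,q_j}(\eps f)$ once at $\eps=0$ yields the DN map of $\Delta_{g_j}$. The hypothesis then gives $\Lambda_{g_1}^{\Delta}=\Lambda_{g_2}^{\Delta}$ on $\p M$, and the 2D conformal determination theorem of Lassas--Uhlmann produces a conformal diffeomorphism $J:M_1\to M_2$ with $J|_{\p M}=\Id$ together with a positive $\sigma\in C^\infty(M_1)$ with $\sigma|_{\p M}=1$ and $\sigma J^{*}g_2=g_1$. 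This settles part (1).

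For part (2) I would apply the fractional linearization from the introduction: writing $f=\sum_{\ell=0}^{k}\eps_\ell f_\ell$ with $r=k+\alpha$, differentiating in $\eps_1,\ldots,\eps_k$, dividing by $\eps_0^{\alpha}$, sending $\eps_0\to 0$, and pairing with an auxiliary $g_j$-harmonic function $v_{k+1}$, the equality of nonlinear DN maps translates into
\[
\int_{M_1} q_1\,\sign(v_0^{(1)})^{k-1}|v_0^{(1)}|^{\alpha}\prod_{\ell=1}^{k+1}v_\ell^{(1)}\,dV_{g_1}=\int_{M_2}q_2\,\sign(v_0^{(2)})^{k-1}|v_0^{(2)}|^{\alpha}\prod_{\ell=1}^{k+1}v_\ell^{(2)}\,dV_{g_2},
\]
where $v_\ell^{(j)}$ is the $g_j$-harmonic extension of a common boundary value $f_\ell$. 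The decisive dimensional input is the identity $\Delta_{cg}=c^{-1}\Delta_g$ valid in dimension two, which makes harmonic functions conformally invariant; hence $v_\ell^{(2)}\circ J$ is $J^{*}g_2$-harmonic, i.e.\ $g_1$-harmonic, and matching boundary data forces $v_\ell^{(2)}\circ J=v_\ell^{(1)}$. Combined with $dV_{J^{*}g_2}=\sigma^{-1}dV_{g_1}$, pulling the right-hand side back to $M_1$ rewrites the identity as
\[
\int_{M_1}\bigl(q_1-\sigma^{-1}(q_2\circ J)\bigr)\sign(v_0)^{k-1}|v_0|^{\alpha}v_1\cdots v_{k+1}\,dV_{g_1}=0
\]
for all $g_1$-harmonic $v_0,\ldots,v_{k+1}$.

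To conclude I would fix $v_0$ as the $g_1$-harmonic extension of a strictly positive boundary function, so that $v_0>0$ throughout $M_1$ by the maximum principle; then $\sign(v_0)^{k-1}|v_0|^{\alpha}=v_0^{\alpha}$ is a fixed strictly positive weight. Setting $Q:=(q_1-\sigma^{-1}(q_2\circ J))v_0^{\alpha}$, the identity says that $Q$ is orthogonal to every product of $k+1\geq 2$ harmonic functions on $(M_1,g_1)$. In two dimensions this density is available via the Bukhgeim-type argument underlying the linear 2D Calder\'on problem, which forces $Q\equiv 0$ and therefore $\sigma q_1=q_2\circ J$.

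The main obstacle, as is typical in this circle of problems, is precisely the density step: one needs enough $g_1$-harmonic functions on the Riemann surface $(M_1,g_1)$ for pairwise (or higher) products to span a dense subspace of a suitable test class, and it is here that the hypothesis $\dim M=2$ is genuinely used (a Bukhgeim-type complex geometric optics construction on a surface, or equivalently density of products of antiholomorphic/holomorphic functions via isothermal coordinates). A secondary technical point is verifying that the fractional higher-order linearization and its limit as $\eps_0\to 0$ formalized in Proposition~\ref{Prop: derivs_and_integral_formula} extend to the Riemannian setting at the regularity $q\in C^\alpha$; this should follow from the same elliptic estimates that yield the identity on bounded Euclidean domains, applied in local charts.
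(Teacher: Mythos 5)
Your proposal is correct and follows essentially the same route as the paper: part (1) via the first linearization and the two-dimensional anisotropic Calder\'on result, and part (2) via the fractional $k$th-order integral identity combined with the conformal invariance of the Laplace--Beltrami operator in dimension two, a positive choice of $v_0$ (the paper simply takes $v_0=1$ along with all but two of the $v_\ell$), and density of products of pairs of harmonic functions from Bukhgeim-type complex geometrical optics on surfaces (the paper cites Guillarmou--Tzou and Guillarmou--Salo--Tzou for exactly this step, and its Proposition~\ref{Prop: derivs_and_integral_formula} is already stated on Riemannian manifolds). The only cosmetic difference is that you pull back the integral identity from $(M_2,g_2)$ to $(M_1,g_1)$, whereas the paper pulls back the equation and DN map first (setting $\widetilde q_2=\sigma^{-1}(q_2\circ J)$) and then applies the identity once on $M_1$; this also sidesteps the small point that comparing boundary integrals over $\p M$ requires the induced boundary measures of $g_1$ and $g_2$ to agree, which follows from $J|_{\p M}=\Id$ and $\sigma|_{\p M}=1$.
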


Furthermore, as shown in \cite{LLLS2019nonlinear} for integer power type nonlinearities, one can also consider the corresponding Calder\'on type inverse problem on a transversally anisotropic manifold. Let us consider inverse problems for the semilinear Schr\"odinger equation on \emph{transversally anisotropic} manifold with fractional power type nonlinearities. The definition of a transversally anisotropic manifold is given as follows.

\begin{defi}
	Let $(M,g)$ be a compact oriented manifold with a $C^\infty$ boundary and with $\dim M\geq 3$.
	$(M,g)$ is called \emph{transversally anisotropic} if $(M,g)\Subset   (T,g)$, where $T=\R \times M_0$ and $g(x)=g(x_1,x')=e(x_1)\oplus g_0(x')$ for $x_1\in \R$ and $x'\in M_0$. Here $(\R, e)$ denotes the Euclidean line and $(M_0, g_0)$ stands for an $(n-1)$-dimensional compact manifold with a smooth boundary. 
	
	
\end{defi}

	\begin{thm}\label{Main Thm 6}
		Let $(M,g)$ be a transversally anisotropic manifold, let $q_j \in C^{\infty}(M)$, and let $\Lambda_{q_j}$ be the DN maps for the equations 
		\[
		\Delta_g u + q_j \abs{u}^{r-1}u =0 \text{ in }M
		\]
		for $j=1,2$, where we further assume the fractional number satisfies
		\[
		r> 3. 
		\]
		Suppose that the DN maps satisfy 
		\[
		\Lambda_{q_1}(f) =\Lambda_{q_2}(f) \text{ on }\p M,
		\]
		for all $f$ with $\norm{f}_{C^{2,\alpha}(\p M)}\leq \delta$, for a sufficiently small number $\delta>0$ and for some $0<\alpha<1$. Then $q_1=q_2$ in $M$.
	\end{thm}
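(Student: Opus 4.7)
The plan is to apply the fractional higher order linearization scheme described in the introduction to reduce the problem to an integral identity involving products of harmonic functions on $(M,g)$, and then to invoke the CGO/Gaussian beam constructions developed in \cite{LLLS2019nonlinear} for the transversally anisotropic setting.

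First write $r = k + \alpha$ with $k \geq 3$ and $\alpha \in (0,1)$; this is possible since $r > 3$ is fractional. Following Proposition~\ref{Prop: derivs_and_integral_formula} and the derivation sketched after~\eqref{laplace_um_equation}, apply the Dirichlet data $f = \epsilon_0 f_0 + \epsilon_1 f_1 + \cdots + \epsilon_k f_k$ to both equations, differentiate the assumed identity $\Lambda_{q_1}(f) = \Lambda_{q_2}(f)$ with respect to $\epsilon_1, \ldots, \epsilon_k$ at $\epsilon' = 0$, divide by $\epsilon_0^\alpha$, let $\epsilon_0 \to 0$, and pair with an auxiliary harmonic function $v_{k+1}$ via integration by parts. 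This produces the identity
\begin{equation*}
\int_M (q_1 - q_2) \, \sgn(v_0)^{k-1} |v_0|^\alpha \, v_1 v_2 \cdots v_{k+1} \, dV_g = 0
\end{equation*}
for all harmonic $v_0, v_1, \ldots, v_{k+1}$ on $M$ with $v_0$ nowhere zero. Choosing $v_0 \equiv 1$ (attained by the boundary value $f_0 = 1$) collapses the weight to $1$, and the task reduces to showing that $\int_M (q_1 - q_2) v_1 \cdots v_{k+1} \, dV_g = 0$ for all harmonic $v_1, \ldots, v_{k+1}$ on $M$ forces $q_1 = q_2$, where we now have $k + 1 \geq 4$ harmonic factors at our disposal.

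This last density statement is precisely the one used for integer nonlinearities $u^m$ with $m \geq 3$ on transversally anisotropic manifolds in \cite{LLLS2019nonlinear}. Concretely, we plug in four complex geometric optics solutions of the form $v_j(x_1, x') = e^{s_j x_1}(a_j(x') + R_j)$, $j = 1, 2, 3, 4$, where the complex parameters $s_j$ are chosen so that the exponential factors combine to produce a prescribed oscillation $e^{i \lambda x_1}$ for a free parameter $\lambda \in \R$, the amplitudes $a_j$ are Gaussian beam quasi-modes on $(M_0, g_0)$ concentrating on a fixed non-tangential transversal geodesic $\gamma$, and $R_j$ are small correctors ensuring that each $v_j$ is harmonic on $(M,g)$. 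The remaining $k - 3 \geq 0$ factors are taken to be constant harmonic functions. Passing to the concentration limit yields the vanishing of the Fourier transform in $x_1$ of the attenuated geodesic ray transform of $q_1 - q_2$ on $(M_0, g_0)$; varying $\gamma$ and $\lambda$ and inverting recovers $q_1 - q_2 = 0$ in $M$.

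The main obstacle is to justify the commutation of the two successive limits: first the fractional limit $\epsilon_0 \to 0$ that delivers the integral identity from only a smallness assumption on $f$, and then the high-frequency CGO concentration limit. In particular, the CGO solutions are complex valued and may have large $C^{2,\alpha}$ norm after rescaling, whereas the identity is derived for small real boundary data; one bridges this by homogeneity/multilinearity of the integrand in each $v_\ell$ and by a standard density argument in $v_{k+1}$. Matching the remainder estimates for the Gaussian beam amplitudes against only $C^\alpha$ control of $q_1 - q_2$ is what requires the four free factors, which in turn is the reason for the hypothesis $r > 3$, mirroring the role of $m \geq 3$ in the integer case treated in \cite{LLLS2019nonlinear}.
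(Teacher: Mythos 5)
Your proposal follows essentially the same route as the paper: the fractional $k$-th order linearization identity of Proposition~\ref{Prop: derivs_and_integral_formula} with the choice $v_0\equiv 1$, followed by the density statement for products of at least four harmonic functions on transversally anisotropic manifolds, which the paper simply cites as \cite[Proposition 5.1]{LLLS2019nonlinear} and whose Gaussian beam/CGO proof you sketch. Your worry about interchanging limits is not an issue, since the integral identity is established (Proposition~\ref{Prop: derivs_and_integral_formula}) before any CGO solutions are inserted, and the passage to complex-valued harmonic functions follows from multilinearity as in Lemma~\ref{lemma_complex_real_solutions}.
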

	
	Theorems \ref{Main Thm 5} and \ref{Main Thm 6} follow from the corresponding arguments in \cite{LLLS2019nonlinear} if we use the integral identity~\eqref{dm_lambdaq_identity} with the choice $v_0=1$ in $M$ (by taking $f_0=1$ on $\p M$).

The structure of this article is given as follows. In Section \ref{Sec 2}, we give well-posedness results for the relevant semilinear elliptic equations and derive the integral identity which plays a crucial role in the study of our inverse problems. In Section \ref{Sec 3}, we prove global uniqueness and simultaneous recovery in the Euclidean case, i.e., Theorems \ref{Main Thm 1}-\ref{Main Thm 4}. Finally, we prove Theorems \ref{Main Thm 5}-\ref{Main Thm 6} in Section \ref{Sec 4}.

\section{Preliminaries}\label{Sec 2}

First, let us recall the definition of H\"older spaces. Let $U\subset\R^n$ be an open set, let $k\in \N \cup \{0\}$, and let $0<\alpha <1$. The function space  $C^{k,\alpha}(\ol{U})$ consists of those real valued functions $u \in C^k(\ol{U})$ for which the norm 
\[
\norm{f}_{C^{k,\alpha}(\ol{U})}:=\sum_{|\gamma|\leq k}\norm{\p ^\gamma f}_{L^\infty(U)}+\sup_{x\neq y, \ x,y\in \ol{U}}\sum_{|\gamma|=k}\frac{|\p ^\gamma f(x)-\p^\gamma f(y)|}{|x-y|^\alpha},
\]
is finite. Here $\gamma=(\gamma_1,\cdots,\gamma_n)$ is a multi-index with $\gamma_i \in \N \cup \{0\}$ and $|\gamma|=\gamma_1 +\cdots +\gamma_n$.  
Furthermore, we also denote the space
\[
C_0^{k,\alpha}(\ol{U}):=\text{closure of }C^\infty_c(U) \text{ in }C^{k,\alpha}(\ol{U}).
\]
In short, we only use $C^\alpha(\ol{U})$ to denote $C^{0,\alpha}(\ol{U})$ when $k=0$. In addition, one can define H\"older spaces on any Riemannian manifold $(M,g)$ using the Riemannian distance or via local coordinates, see e.g. \cite[Section 13.8 in vol.\ III]{taylor2011partial}. 

\subsection{Well-posedness}
Let $(M,g)$ be a $C^\infty$  compact Riemannian manifold with $C^\infty$-smooth boundary $\p M$. 
We study the well-posedness of the following boundary value problem 
\begin{align}\label{Dirichlet problem for well-posedness}
	\begin{cases}
		\Delta _g u +a(x,u)=0 &\text{ in } M,\\
		u=f &\text{ on }\p M,
	\end{cases}
\end{align}
for any sufficiently small Dirichlet data $f\in C^{2,\alpha}(\p M)$, for some $0<\alpha<1$.
Let us assume that the nonlinear coefficient $a=a(x,y) \in C^{k,\alpha}_{\mathrm{loc}}(\mR, C^{\alpha}(M))$ for some $k \geq 1$, meaning that $y \mapsto \p_y^j a(\,\cdot\,, y)$ is a continuous map $\mR \to C^{\alpha}(M)$ for $0 \leq j \leq k$ and for any $R > 0$, $\norm{\p_y^k a(\,\cdot\,,y) - \p_y^k a(\,\cdot\,,z)}_{C^{\alpha}} \leq C_R \abs{y-z}^{\alpha}$ whenever $\abs{y}, \abs{z} \leq R$. Also assume that the following two conditions hold:
\begin{gather}
	a(x,0) = 0, \quad \text{ for }x\in M,\label{a_first_condition} \\
	\text{The map $v \mapsto \Delta_g v + \p_y a(\,\cdot\,,0)v$ is injective on $H^1_0(M)$.} \label{a_second_condition}
\end{gather}
We prove the well-posedness of \eqref{Dirichlet problem for well-posedness} for small Dirichlet data $f\in C^{2,\alpha}(\p M)$. 

\begin{prop}[Well-posedness]\label{Prop: wellposedness_and_expansion}
	Let $(M,g)$ be a compact Riemannian manifold with $C^\infty$ boundary $\p M$ and let $Q$ be the semilinear elliptic operator 
	\[
	Q(u):= \Delta_g u + a(x,u),
	\]
	where $a \in C^{k,\alpha}_{\mathrm{loc}}(\mR, C^{\alpha}(M))$ for some $k \geq 1$, $\alpha \in (0,1)$, and \eqref{a_first_condition} and \eqref{a_second_condition} are satisfied.
	There exist $\delta, C > 0$ such that for any $f$ in the set 
	\[
	U_{\delta} := \left\{ h \in C^{2,\alpha}(\p M) \,;\, \norm{h}_{C^{2,\alpha}(\p M)} \leq \delta \right\},
	\]
    there is a solution $u = u_f$ of 
	\begin{align}\label{solvability}
	\begin{cases}
	\Delta_g u+a(x,u)=0 & \text{ in } M,\\
	u= f & \text{ on } \p M,
	\end{cases}
	\end{align}
	which satisfies
	\begin{align}\label{stability estimate in well-posedness}
	\norm{u}_{C^{2,\alpha}(M)}\leq C \norm{f}_{C^{2,\alpha}(\p M)}.
	\end{align}
	The solution $u_f$ is unique within the class $\left\{ w \in C^{2,\alpha}(M) \,;\, \norm{w}_{C^{2,\alpha}(M)} \leq C \delta \right\}$.
	In addition, there are $C^{k}$ Frech\'et differentiable maps 
	\begin{align*}
	&S: U_{\delta} \to C^{2,\alpha}(M), \quad f \mapsto u_f, \\
	&\Lambda: U_{\delta} \to C^{1,\alpha}(\p M), \quad f \mapsto \p_{\nu} u_f|_{\p M}.
	\end{align*}
\end{prop}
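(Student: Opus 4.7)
The plan is to treat this as a standard application of the implicit function theorem in Banach spaces, in the spirit of the well-posedness arguments in \cite{LLLS2019nonlinear}, but keeping track of the precise regularity of the Nemytskii operator $u \mapsto a(\,\cdot\,,u)$ permitted by the hypothesis $a \in C^{k,\alpha}_{\mathrm{loc}}(\mR, C^{\alpha}(M))$.

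First I would introduce the map
\[
F: C^{2,\alpha}(\p M) \times C^{2,\alpha}(M) \to C^{\alpha}(M) \times C^{2,\alpha}(\p M), \qquad F(f,u) = \bigl( \Delta_g u + a(\,\cdot\,,u),\ u|_{\p M} - f\bigr),
\]
and observe that by \eqref{a_first_condition} we have $F(0,0) = 0$. The crucial analytic step is to verify that $F$ is $C^k$ Fr\'echet differentiable. The linear pieces are trivial, so this reduces to showing that the superposition map $N(u) := a(\,\cdot\,,u)$ is $C^k$ from $C^{2,\alpha}(M)$ to $C^{\alpha}(M)$. Because $C^{2,\alpha}(M) \hookrightarrow C^{\alpha}(M)$ is an algebra of $C^\alpha$ functions and $a \in C^{k,\alpha}_{\mathrm{loc}}(\mR,C^\alpha(M))$, a direct computation using Taylor's formula in the $y$-variable shows that the $j$-th Fr\'echet derivative of $N$ at $u_0$ acts on $(h_1,\ldots,h_j)$ by pointwise multiplication with $\p_y^j a(\,\cdot\,,u_0)$, and the remainder is controlled by the $C^{0,\alpha}$-H\"older bound on $\p_y^k a$.

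Next I would compute the partial Fr\'echet derivative
\[
D_u F(0,0) v = \bigl( \Delta_g v + \p_y a(\,\cdot\,,0)\,v,\ v|_{\p M}\bigr),
\]
and show it is a Banach-space isomorphism from $C^{2,\alpha}(M)$ onto $C^{\alpha}(M) \times C^{2,\alpha}(\p M)$. Injectivity on $C^{2,\alpha}(M)$ with zero boundary data follows from hypothesis \eqref{a_second_condition} combined with elliptic regularity ($H^1_0 \cap C^{2,\alpha}$ solutions vanishing at the boundary). Surjectivity together with the bounded inverse is the standard Schauder/Fredholm theory for the linear operator $\Delta_g + \p_y a(\,\cdot\,,0)$ with $\p_y a(\,\cdot\,,0) \in C^{\alpha}(M)$: the Fredholm alternative plus trivial kernel gives unique solvability of the inhomogeneous Dirichlet problem, and Schauder estimates give the $C^{2,\alpha}$ bound.

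Once this is done, the implicit function theorem produces $\delta>0$ and a $C^k$ map $S:U_\delta \to C^{2,\alpha}(M)$ with $F(f,S(f))=0$ and $S(0)=0$, unique in a $C^{2,\alpha}$-ball of radius $C\delta$. The stability estimate \eqref{stability estimate in well-posedness} then follows because $S$ is locally Lipschitz near $0$ with $S(0)=0$. Finally, the DN map $\Lambda(f) = \p_\nu S(f)|_{\p M}$ is $C^k$ as the composition of $S$ with the continuous linear trace operator $C^{2,\alpha}(M) \to C^{1,\alpha}(\p M)$. The only real obstacle is the Nemytskii regularity step: the hypothesis $a \in C^{k,\alpha}_{\mathrm{loc}}(\mR,C^\alpha(M))$ is precisely tuned so that $N$ is $C^k$ (with $C^{0,\alpha}$ continuous $k$-th derivative) into $C^\alpha(M)$, which is what the implicit function theorem requires; dropping to lower regularity in the $y$-variable would reduce the differentiability order of $S$ and $\Lambda$ and thereby limit how many higher-order linearizations are available.
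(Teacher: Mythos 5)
Your proposal is correct and follows essentially the same route as the paper: the implicit function theorem applied to $F(f,u)=(\Delta_g u + a(\ccdot,u),\, u|_{\p M}-f)$, with $C^k$ regularity of the Nemytskii map via Taylor expansion in $y$, invertibility of $D_uF(0,0)$ from \eqref{a_second_condition} plus Fredholm/Schauder theory, and the Lipschitz bound $S(0)=0$ giving \eqref{stability estimate in well-posedness}. The only place where the paper invests noticeably more work than your sketch is the ``direct computation'' controlling the Taylor remainder in the $C^{\alpha}(M)$ norm: this is isolated as Lemma \ref{lemma_b_calpha} and needs an interpolation-plus-mollification argument to reach the full exponent $\alpha$, but this is a technical refinement of the same idea rather than a different approach.
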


In particular, if $a(x,u)=q(x)|u|^{r-1}u$ for a fractional number $r>1$ and $q\in C^\alpha(M)$, then the function $q(x)|u|^{r-1}u$ satisfies the condition $a(x,0)=\p _y a(x,0)=0$,
which implies that the conditions \eqref{a_first_condition} and \eqref{a_second_condition} hold automatically (due to the well-posedness of the Laplace equation). Hence, Proposition \ref{Prop: wellposedness_and_expansion} implies the well-posedness of the Dirichlet problem \eqref{Main equation} immediately.

For the proof of Proposition \ref{Prop: wellposedness_and_expansion}, we will need a lemma that will also be useful later.

\begin{lem} \label{lemma_b_calpha}
Let $(M,g)$ be a compact Riemannian manifold with $C^\infty$ boundary $\p M$, let $0 < \alpha < 1$, and let $b(x,y) \in C^{\alpha}_{\mathrm{loc}}(\mR, C^{\alpha}(M))$. For any $u \in C^1(M)$ one has $b(x, u(x)) \in C^{\alpha}(M)$, and 
\begin{equation} \label{dkua_holder_estimate}
\norm{b(x, u+v) - b(x,u)}_{C^{\alpha}(M)} = o(1), \text{ as $\norm{v}_{C^1(M)} \to 0$}.
\end{equation}
\end{lem}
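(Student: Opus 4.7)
The proof splits into two parts: first the membership $b(\cdot, u(\cdot)) \in C^{\alpha}(M)$; second the quantitative limit in \eqref{dkua_holder_estimate}.

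For the membership, fix $u \in C^{1}(M)$ and let $R := \norm{u}_{L^{\infty}(M)} + 1$. For $x, y \in M$ I would apply the triangle inequality
\[
b(x, u(x)) - b(y, u(y)) = [b(x, u(x)) - b(y, u(x))] + [b(y, u(x)) - b(y, u(y))].
\]
The first bracket is bounded by $\norm{b(\,\cdot\,, u(x))}_{C^{\alpha}(M)}\, d(x,y)^{\alpha}$, and this norm is uniformly bounded in $x$ using $u(M) \subset [-R, R]$ and the assumption $b \in C^{\alpha}_{\mathrm{loc}}(\mR, C^{\alpha}(M))$. The second bracket uses the pointwise H\"older bound $|b(y, z_{1}) - b(y, z_{2})| \leq \norm{b(\,\cdot\,, z_{1}) - b(\,\cdot\,, z_{2})}_{L^{\infty}(M)} \leq C |z_{1} - z_{2}|^{\alpha}$ (implicit in the assumption on $b$) combined with $|u(x) - u(y)| \leq \norm{u}_{C^{1}}\, d(x, y)$.

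For the limit, set $w := b(\,\cdot\,, u+v) - b(\,\cdot\,, u)$. The $L^{\infty}$-part vanishes at once: $\norm{w}_{L^{\infty}(M)} \leq C \norm{v}_{L^{\infty}}^{\alpha}$. For the H\"older seminorm I would decompose $w(x) - w(y) = \mathrm{I}(x, y) + \mathrm{II}(x, y)$ with
\begin{align*}
\mathrm{I}(x, y) &:= [b(x, u(x)+v(x)) - b(x, u(y)+v(y))] - [b(x, u(x)) - b(x, u(y))], \\
\mathrm{II}(x, y) &:= G(x) - G(y), \qquad G(z) := b(z, u(y)+v(y)) - b(z, u(y)).
\end{align*}
Term $\mathrm{II}$ is handled by the bound $\norm{G}_{C^{\alpha}(M)} \leq C |v(y)|^{\alpha} \leq C \norm{v}_{L^{\infty}}^{\alpha}$, which yields $|\mathrm{II}(x, y)| \leq C \norm{v}_{L^{\infty}}^{\alpha}\, d(x, y)^{\alpha}$.

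Term $\mathrm{I}$ is the main obstacle: it is a mixed second difference of $b$ in the $y$-variable, while only $C^{\alpha}$-regularity there is available. The route I would follow is to mollify $b$ in the $y$-variable, producing a family $b_{\eta}$ smooth in $y$ with $\norm{b(\,\cdot\,, y) - b_{\eta}(\,\cdot\,, y)}_{C^{\alpha}(M)} \leq C \eta^{\alpha}$ and $\norm{\p_{y}^{j} b_{\eta}(\,\cdot\,, y)}_{C^{\alpha}(M)} \leq C \eta^{\alpha - j}$ uniformly for $y$ in bounded sets. Splitting $\mathrm{I} = \mathrm{I}^{b - b_{\eta}} + \mathrm{I}^{b_{\eta}}$, the remainder part is bounded pointwise by $4 C \eta^{\alpha}$, while the smooth part is estimated via the mean value theorem in $y$ applied to $b_{\eta}(x, \cdot)$, producing a bound involving $\norm{v}_{C^{1}}$ and negative powers of $\eta$. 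A dichotomy on whether $d(x, y)$ is large or small compared to $\norm{v}_{L^{\infty}}$, combined with balancing $\eta$ as a suitable power of $\norm{v}_{C^{1}}$, should close the estimate. The key point is that the $C^{1}$-regularity of $u$ and $v$, rather than merely $C^{\alpha}$, is what makes the smooth contribution controllable.
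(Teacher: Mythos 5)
Your membership argument, the $L^\infty$ bound, and the treatment of term $\mathrm{II}$ coincide with the paper's proof, and mollifying $b$ in the $y$-variable is also the device the paper invokes at the end; so the routine parts are fine. The genuine gap is in term $\mathrm{I}$: the step you defer to ``a dichotomy on whether $d(x,y)$ is large or small compared to $\norm{v}_{L^\infty}$, combined with balancing $\eta$'' is precisely the crux, and with the ingredients you list it cannot be closed. For the remainder part of $\mathrm{I}$ you have only two estimates: the pointwise bound $C\eta^{\alpha}$, which carries no factor $d(x,y)^{\alpha}$, and the bound $C\LC\norm{u}_{C^1}+\norm{v}_{C^1}\RC^{\alpha}d(x,y)^{\alpha}$ coming from H\"older continuity in $y$ (which the mollification error inherits with a constant that is uniform in $\eta$ but in no way small). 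The problematic pairs are those with $d(x,y)$ arbitrarily small at fixed $v$ and $\eta$: there the first bound is useless and the second has a constant that does not tend to zero, and no choice of $\eta$ as a power of $\norm{v}_{C^1}$, nor any comparison of $d(x,y)$ with $\norm{v}_{L^\infty}$, changes this. Interpolating the two competing bounds only produces the $o(1)$ estimate in $C^{\beta}$ for every $\beta<\alpha$ --- which is exactly what the paper derives rigorously before its own (also very compressed) mollification step --- not the endpoint $\beta=\alpha$.

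That the endpoint is genuinely obstructed, and not merely awaiting a cleverer balancing, is seen on the admissible example $b(x,y)=\abs{y}^{\alpha}$, $u(x)=x_1$ on a ball, $v\equiv h$ with $h\to 0$: at the two points where $u=0$ and $u=-h$ the function $w=b(x,u+v)-b(x,u)$ takes the values $h^{\alpha}$ and $-h^{\alpha}$, while the points are at distance $h$, so the $C^{\alpha}$ seminorm of $w$ is at least $2h^{\alpha}/h^{\alpha}=2$ for every $h>0$. In other words, the pure second difference of $b$ in the $y$-variable resists an $o(1)\,d(x,y)^{\alpha}$ bound whenever $u$ crosses a point where $b(x,\ccdot)$ is merely H\"older, so any completion of your sketch would need substantially more than the estimates you record (this same endpoint is the delicate point of the paper's own argument). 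As written, your proposal establishes the membership claim and the $o(1)$ estimate in $C^{\beta}$ for all $\beta<\alpha$, but not the stated $C^{\alpha}$ estimate \eqref{dkua_holder_estimate}.
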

\begin{proof}
The assumption that $t \mapsto b(\,\cdot\,,t)$ is a $C^{\alpha}_{\mathrm{loc}}$ function $\mR \to C^{\alpha}(M)$ means that for any $R > 0$ there is $C_R > 0$ such that 
	\begin{align*}
	\abs{b(x,t)} &\leq C_R, \\
	\abs{b(x,t)-b(y,t)} &\leq C_R d_g(x,y)^{\alpha}, \\
	\abs{b(x,t) - b(x,s)} &\leq C_R \abs{t-s}^{\alpha}, \\
	\abs{b(x,t) - b(x,s) - (b(y,t) - b(y,s))}&\leq \norm{b(\ccdot,t)-b(\ccdot,s)}_{C^\alpha(M)}d_g(x,y)^{\alpha} \\
	&\leq C_R d_g(x,y)^{\alpha} \abs{t-s}^{\alpha},
	\end{align*}
whenever $x, y \in M$ and $\abs{t}, \abs{s} \leq R$. 

Now if $u \in C^1(M)$ with $\norm{u}_{L^{\infty}(M)} \leq R$, one has $\abs{b(x,u(x))} \leq C_R$ and 
\begin{align*}
\abs{b(x,u(x))-b(y,u(y))} &\leq \abs{b(x,u(x)) - b(y,u(x))} + \abs{b(y,u(x)) - b(y,u(y))} \\
 &\leq C_R \left[ 1 + \norm{u}_{C^1(M)}^{\alpha} \right] d_g(x,y)^{\alpha}.
\end{align*}
This shows that $b(x,u(x)) \in C^{\alpha}(M)$.

	Let now $u, v \in C^1(M)$ with $\norm{u}_{L^{\infty}} \leq R$ and $\norm{u+v}_{L^{\infty}} \leq R$. Then
	\[
	\norm{b(x,u+v)-b(x,u)}_{L^{\infty}(M)} \leq C_R \norm{v}_{L^{\infty}(M)}^{\alpha}.
	\]
	Let us next estimate the $C^{\alpha}$ norm of $b(x,u+v)-b(x,u)$. Writing $h(x,u) :=  b(x,u)$ and $w_t(x) := u(x) + tv(x)$, we have 
	\begin{align}\label{estimate of h function in Sec 2}
    \begin{split}
    		&\abs{h(x,w_1(x))-h(x,w_0(x)) - [h(y,w_1(y)) - h(y,w_0(y))]} \\
    	\leq &\abs{h(x,w_1(x))-h(x,w_0(x)) - [h(y,w_1(x))-h(y,w_0(x))]} \\
    	& + \abs{h(y,w_1(x)) - h(y,w_0(x)) - [h(y,w_1(y)) - h(y,w_0(y))]}.
    \end{split}
	\end{align}
	The first absolute value on the right of \eqref{estimate of h function in Sec 2} is $\leq C_R d_g(x,y)^{\alpha} \abs{v(x)}^{\alpha}$. The second absolute value on the right of \eqref{estimate of h function in Sec 2} can be estimated by grouping the terms in two different ways and using the triangle inequality: it is either $\leq C_R \norm{v}_{L^{\infty}(M)}^{\alpha}$ or $\leq C_R \LC \norm{u}_{C^1(M)} + \norm{v}_{C^1(M)}\RC^{\alpha} d_g(x,y)^{\alpha}$.

	By interpolation, this shows that for any $\beta < \alpha$ one has 
	\[
	\norm{b(x,u+v)-b(x,u)}_{C^{\beta}(M)} = o(1),   \text{ as }\norm{v}_{C^1(M)} \to 0.
	\]
	This estimate is also true for $\beta = \alpha$. This can be seen by writing 
	\[
	b = b_{\eps} + r_{\eps},
	\] 
	where 
	\[
	b_{\eps}(x,t) = \int_{\R} \varphi_{\eps}(t-s) b(x,s) \,ds.
	\] 
	Here $\varphi_{\eps}(t) = \eps^{-n} \varphi(t/\eps)$ is a standard mollifier with $\varphi \in C^{\infty}_c((-1,1))$, $0 \leq \varphi \leq 1$, and $\int_{\R} \varphi(t) \,dt = 1$. Repeating the argument above for $b_{\eps}$ using a higher H\"older exponent in $t$, and using the estimate $\norm{r_{\eps}(\,\cdot\,,t)}_{C^{\alpha}(M)} \leq C_R \eps^{\alpha}$ for $\abs{t} \leq R$ which follows from the regularity of $b$, finally yields the estimate 
	\[
	\norm{b(x,u+v)-b(x,u)}_{C^{\alpha}(M)} = o(1), \text{ as $\norm{v}_{C^1(M)} \to 0$.} \qedhere
	\]
\end{proof}

\begin{proof}[Proof of Proposition \ref{Prop: wellposedness_and_expansion}]
	We prove the existence of solutions by using the implicit function theorem in Banach spaces~\cite[Theorem 4.B]{Zeidler1986}.
 Let 
	\[
	 X=C^{2,\alpha}(\p M), \quad Y=C^{2,\alpha}(M), \quad Z=C^{\alpha}(M)\times C^{2,\alpha}(\p M).
	\]
	Consider the map 
	\[
	 F:X\times Y\to Z, \quad F(f,u)=\LC Q(u),u|_{\p M}-f \RC.
	\]
	Now $F$ indeed maps to $Z$, since by Lemma \ref{lemma_b_calpha} the map $u \mapsto a(x,u)$ takes $C^{2,\alpha}(M)$ to $C^{\alpha}(M)$. Thus $F$ is well defined.
	 
	 We next show that $F$ is a $C^k$ map. Let $0<m\leq k$ be an integer. If $u, v \in C^{2,\alpha}(M)$ we use the Taylor formula  
	\begin{align}\label{Taylor formula in Section 2}
	\begin{split}
		&a(x,u+v) \\
		=& \sum_{j=0}^{m-1} \frac{\p_u^j a(x,u)}{j!} v^j + \int_0^1 \frac{\p_u^{m} a(x,u+tv)}{(m-1)!} v^{m} (1-t)^{m-1} \,dt \\
		=&\sum_{j=0}^{m} \frac{\p_u^j a(x,u)}{j!} v^j-\frac{v^m}{m!}\p_u^m a(x,u)+\int_0^1 \frac{\p_u^{m} a(x,u+tv)}{(m-1)!} v^{m} (1-t)^{m-1} \,dt \\
		=&\sum_{j=0}^{m} \frac{\p_u^j a(x,u)}{j!} v^j+\frac{v^m}{(m-1)!}\int_0^1 \left[\p_u^{m} a(x,u+tv)-\p_u^{m} a(x,u) \right](1-t)^{m-1}\,dt.
	\end{split}
	\end{align}
	We study the remainder term. From \eqref{dkua_holder_estimate} with $b = \p_u^m a$ we obtain the estimate 
	\[
	\norm{\p_u^{m} a(x,u+tv)-\p_u^{m} a(x,u)}_{C^{\alpha}(M)} = o(1), \text{ if $t \in [0,1]$ and $\norm{v}_{C^{2,\alpha}(M)} \to 0$.}
	\]
	Inserting this in the Taylor formula computation \eqref{Taylor formula in Section 2} yields 
%
	\[
	\left \| a(x,u+v) - \sum_{j=0}^m \frac{\p_u^j a(x,u)}{j!} v^j\right\|_{C^{\alpha}(M)} = o\LC \norm{v}_{C^{2,\alpha}(M)}^{m}\RC , \text{ as $\norm{v}_{C^{2,\alpha}(M)} \to 0$.}
	\]
%
%
%
	This shows that $u \mapsto a(x,u)$ is a $C^k$ map $C^{2,\alpha}(M) \to C^{\alpha}(M)$. Since the other parts of $F$ are linear, $F$ is a $C^k$ map. 
    
    Note that $F(0,0) = 0$ by \eqref{a_first_condition}. The linearization of $F$ at $(0,0)$ in the $u$-variable is 
    \[
    \left. D_uF\right|_{(0,0)}(v) = \LC \Delta_g v + \p_u a(x,0) v,v|_{\p M} \RC.
    \]
    This is a homeomorphism $Y\to Z$ by \eqref{a_second_condition}. 
    To see this, let $(w,\phi)\in Z=C^{\alpha}(M)\times C^{2,\alpha}(\p M)$, and consider the Dirichlet problem 
    \begin{align}\label{first linearization equation in the well-posedness}
    \begin{cases}
    (\Delta_g + \p_u a(x,0))v=w & \text{ in } M, \\
    v=\phi & \text{ on } \p M.
    \end{cases}
    \end{align}
The solution of \eqref{first linearization equation in the well-posedness}, if it exists, is unique by \eqref{a_second_condition}, and by using the Fredholm alternative and Schauder estimates the solution $v\in Y=C^{2,\alpha}(M)$ exists (see e.g.\ \cite[Exercise 1 in Section 13.8]{taylor2011partial}) and depends continuously on the data $(w,\phi)$. 
  Thus the implicit function theorem in Banach spaces \cite[Theorem 4.B]{Zeidler1986} 
    yields that there is $\delta>0$, a closed ball $U_{\delta}=\ol{B_X(0,\delta)}\subset X$, and a $C^k$ map $S: U \to Y$ such that whenever $\norm{f}_{C^{2,\alpha}(\p M)} \leq \delta$ we have 
    \[
     F(f,S(f))=(0,0).
    \]
    Since $S$ is Lipschitz continuous and $S(0) = 0$, $u = S(f)$ satisfies 
    \[
     \norm{u}_{C^{2,\alpha}(M)}\leq C \norm{f}_{C^{2,\alpha}(\p M)}.
    \]
    Moreover, by redefining $\delta$ if necessary $u=S(f)$ is the only solution to $F(f,u)=(0,0)$ whenever $\norm{u}_{C^{2,\alpha}(M)}\leq C \delta$.
	We have proven the existence of unique small solutions of the Dirichlet problem~\eqref{solvability} and the fact that the solution operator $S: U_{\delta} \to C^{2,\alpha}(M)$ is a $C^{k}$ map. Since the normal derivative is a linear map $C^{2,\alpha}(M) \to C^{1,\alpha}(\p M)$, it follows that also $\Lambda$ is a well defined $C^{k}$ map $U_{\delta} \to C^{1,\alpha}(\p M)$.	
\end{proof}

In the next proposition we present an integral identity involving the $k$th linearization the DN map $\Lambda_q$. Below, we write 
\[
(D^k f)_x(y_1,\ldots,y_k) 
\]
to denote the $k$th derivative at $x$ of a $C^k$ map $f$ between Banach spaces, considered as a symmetric $k$-linear form acting on $(y_1,\ldots,y_k)$. We refer to  \cite[Section 1.1]{hormander1983analysis}, where the notation $f^{(k)}(x; y_1, \ldots, y_k)$ is used instead of $(D^k f)_x(y_1,\ldots,y_k)$. 


%

\begin{prop}[Integral identity] \label{Prop: derivs_and_integral_formula}
	Let $(M,g)$ be a compact $C^\infty$ Riemannian manifold with a $C^\infty$ smooth boundary $\p M$. Let $q \in C^{\alpha}(M)$, and let $\Lambda_q$ be the DN map for the semilinear elliptic equation 
	\begin{equation}\label{Dir_prblm}
		\Delta_g u + q \abs{u}^{r-1}u = 0 \text{ in $M$},
	\end{equation}
	where
	\[
	r=k+\alpha, \quad k \geq 1 \text{ and } \alpha \in (0,1).
	\]
	Let $f_0\in C^{2,\alpha}(\p M)$. Then the $k$th linearization $(D^k \Lambda_q)_{\eps_0 f_0}$ of $\Lambda_q$ at $\eps_0 f_0$ satisfies the following identity:  
	For any $f_1, \ldots, f_{k+1} \in C^{2,\alpha}(\p M)$ one has 
	\begin{align} \label{dm_lambdaq_identity}
	\begin{split}
			&\lim_{\eps_0\to 0} \eps_0^{-\alpha} \int_{\p M} (D^k \Lambda_q)_{\eps_0 f_0}(f_1, \ldots, f_k)f_{k+1}  \,dS \\
			& \quad = c_r \int_M  q \abs{v_{0}}^{r-1}v_0^{1-k} v_{1}\cdots v_{k+1} \,dV,
	\end{split}
	\end{align}
	where $c_r$ is the constant given by 
	\begin{align*}
	c_r=-r(r-1)\cdots (r-(k-1)).
	\end{align*}

	Here each $v_{\ell}$, $\ell =0,\ldots,k+1$, is a harmonic function satisfying 
	\begin{align}\label{first linearized harmonic on manifold}
		\begin{cases}
			\Delta_g v_{\ell}=0 & \text{ in } M,\\
			v_{\ell}=f_\ell &\text{ on } \p M.
		\end{cases}
	\end{align}
	
\end{prop}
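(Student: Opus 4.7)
The plan is to carry out a $k$-fold higher order linearization at the basepoint $\eps_0 f_0$ and extract the leading $\eps_0^{\alpha}$ asymptotic via the Fa\`a di Bruno formula. Since $F(y):=\abs{y}^{r-1}y$ is $C^{k,\alpha}$ with $r=k+\alpha$, Proposition~\ref{Prop: wellposedness_and_expansion} gives $C^k$ Fr\'echet differentiability of $S:f\mapsto u_f$ and of $\Lambda_q$. I introduce multi-parameter Dirichlet data $f^\eps:=\eps_0 f_0+\sum_{j=1}^k \eps_j f_j$ for small $\eps=(\eps_0,\eps')$ with $\eps'=(\eps_1,\dots,\eps_k)$, set $u^\eps:=S(f^\eps)$, and define $v_j^{\eps_0}:=\p_{\eps_j}u^\eps|_{\eps'=0}$ and $w^{\eps_0}:=\p_{\eps_1}\cdots\p_{\eps_k}u^\eps|_{\eps'=0}$. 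Then $(D^k\Lambda_q)_{\eps_0 f_0}(f_1,\dots,f_k)=\p_\nu w^{\eps_0}|_{\p M}$ and, for $k\geq 2$, $w^{\eps_0}|_{\p M}=0$ because $f^\eps$ is affine in $\eps'$.

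Differentiating the PDE $\Delta_g u^\eps+qF(u^\eps)=0$ in $\eps_1,\dots,\eps_k$ and evaluating at $\eps'=0$, the Fa\`a di Bruno formula gives
\begin{equation*}
\Delta_g w^{\eps_0}=-q\sum_{\pi\in P(k)} F^{(|\pi|)}(u^{\eps_0 f_0})\prod_{B\in\pi}\partial^B_{\eps'} u^\eps\big|_{\eps'=0},
\end{equation*}
where $P(k)$ is the set of partitions of $\{1,\dots,k\}$ and $\partial^B_{\eps'}:=\prod_{j\in B}\p_{\eps_j}$. Applying Green's identity on $(M,g)$ against the harmonic extension $v_{k+1}$ of $f_{k+1}$ yields
\begin{equation*}
\int_{\p M}(\p_\nu w^{\eps_0})f_{k+1}\, dS=-\int_M q\sum_{\pi\in P(k)} F^{(|\pi|)}(u^{\eps_0 f_0})\prod_{B\in\pi}\partial^B_{\eps'} u^\eps\big|_{\eps'=0}\,v_{k+1}\, dV.
\end{equation*}

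The core technical step is the leading-order asymptotic as $\eps_0\to 0$. A rescaling combined with Proposition~\ref{Prop: wellposedness_and_expansion} gives $u^{\eps_0 f_0}=\eps_0 v_0+O(\eps_0^r)$ in $C^{2,\alpha}(M)$; an induction on $|B|$ yields $v_j^{\eps_0}\to v_j$ in $C^{2,\alpha}$ when $|B|=1$ and $\partial^B_{\eps'} u^\eps|_{\eps'=0}=O(\eps_0^{r-|B|})$ in $C^{2,\alpha}$ for $|B|\geq 2$ (zero boundary data, forcing of that order). A direct computation gives $F^{(j)}(y)=-c_{r,j}\,\sgn(y)^{j+1}\abs{y}^{r-j}$ for $y\neq 0$ with $c_{r,j}:=-r(r-1)\cdots(r-j+1)$, so $F^{(j)}(u^{\eps_0 f_0})=O(\eps_0^{r-j})$. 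Counting exponents, a partition $\pi$ with $s$ non-singleton blocks contributes at order $\eps_0^{\alpha+s(r-1)}$; hence only the finest partition $\pi=\{\{1\},\dots,\{k\}\}$ ($s=0$) contributes at the leading order $\eps_0^\alpha$. By dominated convergence, justified by the uniform bound $\eps_0^{-\alpha}|u^{\eps_0 f_0}|^{\alpha}\lesssim \|v_0\|_{L^\infty}^\alpha$, we obtain
\begin{equation*}
\lim_{\eps_0\to 0}\eps_0^{-\alpha}\int_{\p M}(\p_\nu w^{\eps_0})f_{k+1}\, dS=c_r\int_M q\,\sgn(v_0)^{k+1}\abs{v_0}^{\alpha} v_1\cdots v_{k+1}\, dV,
\end{equation*}
and \eqref{dm_lambdaq_identity} follows from $\sgn(v_0)^{k+1}\abs{v_0}^{\alpha}=\sgn(v_0)^{k-1}\abs{v_0}^{\alpha}=\abs{v_0}^{r-1}v_0^{1-k}$ (continuously extended by zero where $v_0=0$, since $\alpha>0$).

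The principal obstacle is the careful Fa\`a di Bruno bookkeeping — in particular the inductive estimate for $\partial^B_{\eps'}u^\eps|_{\eps'=0}$ and the verification that every non-finest partition is genuinely $o(\eps_0^\alpha)$ — together with the passage to the limit inside the integral, where the non-smoothness of $|\cdot|^{\alpha}$ and $\sgn(\cdot)$ at the origin must be controlled via the dominated convergence argument above. The case $k=1$ requires an extra remark: then $w^{\eps_0}=v_1^{\eps_0}$ has nonzero boundary data $f_1$, so Green's identity produces an additional $\eps_0$-independent term $\langle\Lambda_0 f_1,f_2\rangle$ that should be subtracted off (and which automatically cancels in the inverse-problem applications of Section~\ref{Sec 3}, where the identity is applied to $\Lambda_{q_1}-\Lambda_{q_2}$).
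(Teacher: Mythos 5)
Your proposal is correct and follows essentially the same route as the paper: multi-parameter linearization at $\eps_0 f_0$, Fa\`a di Bruno for the $k$th derivative of $|u|^{r-1}u$, integration against the harmonic function $v_{k+1}$, and the scaling $u_{\eps_0 f_0}=\eps_0 v_0+O(\eps_0^r)$ showing that only the finest partition survives at order $\eps_0^{\alpha}$ (the paper gets the same conclusion by bounding the non-finest products in $C^{\alpha}$ and using homogeneity of $|y|^{r-1}y^{1-|\sigma|}$, rather than your sharper inductive bound $\p^B_{\eps'}u|_{\eps'=0}=O(\eps_0^{r-|B|})$, and it passes to the limit in $C^{\alpha}$ instead of by dominated convergence; both are fine). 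Your remark on $k=1$ is a genuine catch: the paper's proof asserts $w^{\eps_0}|_{\p M}=0$, which holds only for $k\geq 2$, so for $k=1$ the stated identity indeed needs the $\eps_0$-independent term $\int_{\p M}(\Lambda_0 f_1)f_2\,dS$ subtracted (or must be read for differences of DN maps, as in its applications).
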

\begin{proof}
	Let $f_0\in C^{2,\alpha}(\p M)$ and denote $h_0=\eps_0f_0$, where $\eps_0$ is small. The nonlinearity $a(x,u) = q(x) |u|^{r-1}u$ satisfies the conditions in Proposition \ref{Prop: wellposedness_and_expansion}, and thus the DN map $\Lambda_q = \p_{\nu} S|_{\p M}$ is well defined for boundary data $f$ with $\norm{f}_{C^{2,\alpha}(\p M)} \leq \delta$. Here $S: f \mapsto u_f$ is the solution operator for the Dirichlet problem of the equation~\eqref{Dir_prblm}. 
	
	We first compute the derivatives of $\Lambda_q$ at $h_0$. For this it is enough to consider the derivatives of $S$. Let us write 
	\[
	\wt f = \wt f(x;\eps_1,\ldots, \eps_k):= \eps_1 f_1 (x)+ \ldots + \eps_k f_k(x).
	\]
	Let $f=h_0 +\wt f$, then the solution  
	\[
	u_{f}:=S(f)=S(h_0+\eps_1 f_1 + \cdots + \eps_k f_k)\in C^{2,\alpha}(M)
	\]
	is $k$ times continuously  differentiable with respect to the parameters $\eps_1, \ldots, \eps_k$ 
	by Proposition \ref{Prop: wellposedness_and_expansion}. 
	Let us denote
	\[
	\eps:=(\eps_0,\eps'), \quad \eps':=(\eps_1,\ldots,\eps_k).
	\]
	
	Applying $\left.\p_{\eps_1} \cdots \p_{\eps_j}\right|_{\eps'=0}$ to the Taylor formula for $C^k$ maps (see e.g.\ \cite[equation (1.1.8)]{hormander1983analysis}) 
	\[
	u_f = S(h_0 + \wt f) = \sum_{m=0}^k \frac{(D^m S)_{h_0}(\wt f, \ldots, \wt f)}{m!} + o\LC \lVert \wt f \rVert_{C^{2,\alpha}(\p M)}^k\RC
	\]
	implies that $(D^m S)_{h_0}$ for $0 \leq m \leq k$ may be computed using the formula 
	\begin{equation} \label{djs_formula}
	(D^m S)_{h_0}(f_1, \ldots, f_m) = \left. \p_{\eps_1} \cdots \p_{\eps_m} u_f \right|_{\eps'=0}.
	\end{equation}
	Moreover, since $S$ is $C^{k}$ map $C^{2,\alpha}(\p M)\to C^{2,\alpha}(M)$, since $u \mapsto q(x) \abs{u}^{r-1} u$ is a $C^k$ map $C^{2,\alpha}(M) \to C^{\alpha}(M)$ by the argument in Proposition \ref{Prop: wellposedness_and_expansion}, and since $\Delta_g$ is linear, we may differentiate the equation 
	\begin{equation} \label{mth_power_non-linearity}
		\begin{cases}
			\Delta_g u_f + q(x) \abs{u_f}^{r-1}u_f = 0 &\text{ in }M,\\
			u_f =f= h_0+\wt f  &\text{ on }\p M,
		\end{cases}
	\end{equation}
	up to $k$ times in the $\eps_\ell$ variables at $\eps'=0$ (recalling that $\wt f=f(x;\eps')=\wt f(x;\eps_1,\ldots,\eps_k)$).
	
	%
	Let $\ell\in \{1,\ldots, k\}$. Then for any $\beta>0$ we have the identity 
	\begin{align*}
		\p_{\eps_\ell} \LC \abs{u_f}^{\beta}u_f\RC = (\beta\abs{u_f}^{\beta-2}u_f^2 + \abs{u_f}^\beta)\p_{\eps_\ell} u_f 
		 =(\beta +1 )|u_f|^{\beta} \p _{\eps_{\ell}}u_f
	\end{align*}
    so that
	\begin{align}\label{first_lin}
		\begin{cases}
			\Delta_g \LC\p_{\eps_\ell} u_{f}|_{\eps' = 0}\RC + q(x) r|u_f|^{r-1}  \p_{\eps_\ell }u_f \big|_{\eps'=0} = 0 &\text{ in }M,\\
			\left. \p_{\eps_\ell} u_f \right|_{\eps' = 0} = f_\ell &\text{ on }\p M.
		\end{cases}
	\end{align}		
	Thus the first linearization of the map $S$ at $h_0$ is 
\begin{align}\label{eq:first linearization of S}
	v^{\eps_0}_{\ell}:= (DS)_{h_0}(f_\ell) = \left. \p_{\eps_\ell} u_{f}\right|_{\eps'=0} 
\end{align}
	where $v_{\ell}^{\eps_0}$ satisfies \eqref{first_lin}. For $\ell=1,2,\ldots, k$, we also claim that 
\begin{equation}\label{v ell epsilon limit}
	\lim_{\eps_0\to 0} v_{\ell}^{\eps_0}=v_{\ell} \text{ in } C^{2,\alpha}(M),
\end{equation}
	where $v_{\ell}$ is the harmonic function satisfying \eqref{first linearized harmonic on manifold} with Dirichlet data $f_\ell$.
	To prove \eqref{v ell epsilon limit}, note by the Schauder estimates we have 
	\begin{align*}
		\norm{v_{\ell}^{\eps_0}-v_\ell}_{C^{2,\alpha}(M)}&\leq  C\LC \norm{\Delta_g(v_\ell^{\eps_0}-v_{\ell})}_{C^{\alpha}(M)}+\norm{\eps_0f_0+f_\ell-f_\ell}_{C^{2,\alpha}(\p M)} \RC \\
		&= C\left(
\norm{q \left[r\abs{u_f}^{r-1}\p_{\eps_{\ell}} u_f\right] \big|_{\eps'=0}}_{C^{\alpha}(M)}
		+\norm{\eps_0f_0}_{C^{2,\alpha}(\p M)} \right) \\
		&\leq C \left( \norm{\abs{u_{\eps_0 f_0}}^{r-1}}_{C^{\alpha}(M)} + \eps_0 \right).
	\end{align*}
	Now $\norm{u_{\eps_0 f_0}}_{C^{2,\alpha}(M)} \leq C \eps_0 \norm{f_0}_{C^{2,\alpha}(\p M)}$ by \eqref{stability estimate in well-posedness}. Then \eqref{dkua_holder_estimate} with $b(x,t)$ replaced by $\abs{t}^{r-1}$ implies that $ \norm{\abs{u_{\eps_0 f_0}}^{r-1}}_{C^{\alpha}(M)} \to 0$ as $\eps_0\to 0$, proving \eqref{v ell epsilon limit}.
	
	Let now $2 \leq j \leq k$. Applying $\left. \p_{\eps_1} \cdots \p_{\eps_j}\right|_{\eps'=0}$ to \eqref{mth_power_non-linearity} gives that 
	\begin{align*}
		\begin{cases}
			\Delta_g \LC  \left. \p_{\eps_1} \cdots \p_{\eps_j} u_f\right|_{\eps'=0}\RC = 
- \left. \p_{\eps_1}\cdots \p_{\eps_j} \LC q(x)|u|^{r-1}u \RC \right|_{\eps'=0}
			 &\text{ in }M, \\
			\left. \p_{\eps_1} \cdots \p_{\eps_j} u_f \right|_{\eps'=0} = 0 &\text{ on }\p M,
		\end{cases}
	\end{align*}
	%
	%
	Since $r > k$, the fact that $u_f$ is $k$ times continuously Frech\'et differentiable in $\eps'$ gives that 
	\[
	\lim_{\eps_0\to 0}
	\left. \p_{\eps_1}\cdots \p_{\eps_j} \LC q(x)|u|^{r-1}u \RC \right|_{\eps'=0}
	=0.
	\]
	By an argument similar to the one above using Schauder estimates we obtain
	\[
	\lim_{\eps_0\to 0}\left. \p_{\eps_1} \cdots \p_{\eps_j} u_f\right|_{\eps'=0}=0.
	\]
	
%
Let us consider the $k$th mixed derivative $w^{\eps_0} :=  \p_{\eps_1} \cdots \p_{\eps_k} u_f|_{\eps'=0}$ further. It satisfies the equation 
	\begin{align} \label{weps_equation}
		\begin{cases}
			\Delta_g w^{\eps_0} = 
- \left. \p_{\eps_1}\cdots \p_{\eps_k} \LC q(x)|u|^{r-1}u \RC \right|_{\eps'=0}
			 &\text{ in }M, \\
			w^{\eps_0} = 0 &\text{ on }\p M,
		\end{cases}
	\end{align}
	We wish to multiply \eqref{weps_equation} by $\eps_0^{-\alpha}$ and take the limit as $\eps_0 \to 0$. Since $f(t) = \abs{t}^{r-1} t$ for $r=k+\alpha$ satisfies the homogeneity relation $f(\lambda t) = \lambda^r f(t)$ for $\lambda > 0$, we have that
\begin{equation*}
\frac{d^k}{d y^k} \LC |y|^{r-1}y \RC= r(r-1)\cdots (r-(k-1)) |y|^{r-1} y^{1-k} = -c_r |y|^{r-1} y^{1-k}.
\end{equation*}

Using Fa\`a di Bruno's formula, see \cite{Hardy2006combinatorics}, we find that 
\begin{equation}\label{eq:Bruno}
\begin{split}
	\left.\p_{\eps_1} \cdots \p_{\eps_k} \LC \abs{u_f}^{r-1}u_f\RC \right|_{\eps'=0} 
	=&
	\sum_{\sigma\in P} c_\sigma |u_f|^{r-1}u_f^{1-|\sigma|}
	\prod_{\delta\in \sigma} \p^\delta_{\eps'} u_f\Big|_{\eps'=0}\\
	=&
	c_r \abs{u_f}^{r-1}u_f^{1-k} \LC\p_{\eps_1}u_f \RC\cdots \LC \p_{\eps_k}u_f\RC|_{\eps'=0} \\
	&+\sum_{\substack{\sigma\in P,\\ |\sigma|<k}} c_\sigma |u_f|^{r-1}u_f^{1-|\sigma|}
	\prod_{\delta\in \sigma}\p^\delta_{\eps'} u_f\Big|_{\eps'=0},
\end{split}
\end{equation}
where $P$ contains all partitions of $\{1,\ldots,k\}$ and the product over $\delta\in\sigma$ runs over all sets in the partition $\sigma$. The number $|\sigma|$  denotes the cardinality of the set $\sigma$ and  $\p^\delta_{\eps'}$ is the usual multi-index notation for partial derivatives in $\eps'$.

Observe that $u_f|_{\eps'=0}$ solves the nonlinear equation~\eqref{Dir_prblm} with boundary value $ h_0 =\eps_0f_0$. By continuity and uniqueness of solutions, we have that
\begin{equation}\label{v0 epsilon limit}
\eps_0^{-1}u_f \big|_{\eps'=0}\to v_{0} \text{ in } C^{2,\alpha}(M), \quad \text{ as }\eps_0 \to 0.
\end{equation}
%
%
Then note that $|\sigma|<k$ implies that the products
$$
\prod_{\delta\in \sigma}\p^\delta_{\eps'} u_f\Big|_{\eps'=0}
$$
are bounded in $C^{\alpha}(M)$ as $\eps_0\to 0$, because the solution operator $S$ is continuously $k$-Fr\'echet differentiable and the H\"older space $C^\alpha(M)$ is an algebra.
Next, since the function $g(y) = |y|^{r-1}y^{1-|\sigma|}$ is homogeneous of degree $k-|\sigma|+\alpha\geq 1+\alpha$,  Euler's homogeneous function theorem shows that it belongs to $C^{1}(\R)$.
Since the composition of $C^1(\R)$ function with a $C^{2,\alpha}(M)$ function is at least $C^\alpha(M)$, we have that
\begin{equation}\label{holder_composition}
\left.\eps^{-\alpha} |u_f|^{r-1}u_f^{1-|\sigma|}\right|_{\eps'=0} = \left. \eps_0^{k-|\sigma|}
\left|\frac{u_f}{\eps_0}\right|^{r-1}\left(\frac{u_f}{\eps_0}\right)^{1-|\sigma|}\right|_{\eps'=0} \to 0 \quad\text{in } C^{\alpha}(M)
\end{equation}
as $\eps_0\to 0$. By using \eqref{v ell epsilon limit}, \eqref{v0 epsilon limit} and \eqref{holder_composition}, we see that after multiplying~\eqref{eq:Bruno} by $\eps_0^{-\alpha}$ and taking the limit $\eps_0\to 0$, only the first term on the right hand side of~\eqref{eq:Bruno} survives.
To analyze this first term in the right-hand side of~\eqref{eq:Bruno}, observe that $g(y)=|y|^{r-1}y^{1-k}$ belongs to $C^\alpha(\R)$ and $u_f$ is in $C^{2,\alpha}(M)$, so the composition $|u_f|^{r-1}u_f^{1-k}$ is in $C^\alpha(M)$. 
Recall again from~\eqref{eq:first linearization of S} that $\left. \p_{\eps_\ell} u_{f}\right|_{\eps'=0}\to v_\ell$ in $C^{2,\alpha}(M)$ as $\eps_0\to 0$ for all $\ell=1,2,\ldots, k$.
Due to the continuity of the solution map $S$, we finally have in $C^\alpha$ the limit
	\begin{equation}\label{rhs_dk_limit}
		\lim_{\eps_0\to 0}\eps_0^{-\alpha}\left. \p_{\eps_1} \cdots \p_{\eps_k}  \LC  q \abs{u_f}^{r-1}u_f\RC \right|_{\eps'=0} =-c_r q \abs{v_0}^{r-1} v_0^{1-k} v_1 \cdots v_k.
	\end{equation}

	Integrating the equation \eqref{weps_equation} against the harmonic function $v_{k+1}$, we have 
	\[
	\int_{\p M} (\p_{\nu} w^{\eps_0}) f_{k+1} \,dS = - \int_M \p_{\eps_1} \cdots \left. \p_{\eps_k} \LC  q(x)\abs{u_f}^{r-1}u_f\RC \right|_{\eps'=0} v_{k+1} \,dV.
	\]
	Since $\Lambda_q = \p_{\nu} S$ where $\p_{\nu}$ is linear, the formula \eqref{djs_formula} gives that $\p_{\nu} w^{\eps_0}|_{\p M} = (D^k \Lambda_q)_{\eps_0 f_0}(f_1, \ldots, f_k)$. Now \eqref{rhs_dk_limit} yields 
	\[
	\lim_{\eps_0 \to 0} \eps_0^{-\alpha} \int_{\p M} (D^k \Lambda_q)_{\eps_0 f_0}(f_1, \ldots, f_k) f_{k+1} \,dS = c_r \int_M q \abs{v_{0}}^{r-1}v_{0}^{1-k} v_{1} \cdots v_{k} \,dV
	\]
	as required.
\end{proof}

It is easy to see that the integral identity also holds for any $f\in C^{2,\alpha}_0(\Gamma)$, for any open subset $\Gamma \subset \p M$.
The following result is an easy consequence of the preceding proposition. For simplicity we only state the result in Euclidean domains.

\begin{cor}[Integral identity with partial data]\label{Cor: derivs_and_integral_formula}
	Let $\Omega \subset \R^n$ be a bounded domain with $C^\infty$-smooth boundary $\p \Omega$, for $n \geq 2$,  and let $\Gamma \subset \p \Omega$ be a nonempty relatively open subset.
	Let $q \in C^{\alpha}(\overline{\Omega})$ for some $0<\alpha<1$, and let $\Lambda_q^\Gamma$ be the partial data DN map for the semilinear elliptic equation 
	\begin{equation*}
		\begin{cases}
			\Delta u + q|u|^{r-1}u=0 & \text{ in }\Omega,\\
			u=f & \text{ on }\p \Omega,
		\end{cases}
	\end{equation*}
	where $r = k + \alpha$ with $k \geq 1$ and $\alpha \in (0,1)$. The $k$th linearization $D^k \Lambda_q^\Gamma$ of $\Lambda_q^\Gamma$ satisfies the following identity:  
	For any $f_0, f_1, \ldots, f_{k+1} \in C^{2,\alpha}_0(\Gamma)$, one has 
	\begin{align} \label{dm_lambdaq_identity in Euclidean}
	\begin{split}
			& \lim_{\eps_0\to 0} \int_{\p \Omega} \eps_0^{-\alpha}\LC D^k \Lambda_q^\Gamma\RC _{\eps_0 f_0}(f_1, \ldots, f_{k})f_{k+1}  \,dS \\
			 & \quad = c_r \int_\Omega  q\abs{v_{0}}^{r-1}v_0^{1-k} v_{1}\cdots v_{k+1} \,dx,
	\end{split}
	\end{align}
	where $c_r=- r(r-1)\cdots (r-(k-1))$.
	Here each $v_{\ell}$, $\ell=0,\ldots,k+1$, is a harmonic function satisfying 
	\begin{align*}
	\Delta v_{\ell}=0 \text{ in } \Omega \quad  \text{ and }\quad   v_{\ell}=f_\ell\text{ on } \p \Omega.
	\end{align*}
\end{cor}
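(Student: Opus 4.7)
The plan is to reduce the partial data statement directly to Proposition~\ref{Prop: derivs_and_integral_formula} by observing that the partial DN map $\Lambda_q^\Gamma$ is simply the restriction of the full DN map $\Lambda_q$ to test functions supported in $\Gamma$, and that the boundary integral on the left-hand side is over $\Gamma$ because $f_{k+1} \in C^{2,\alpha}_0(\Gamma)$.

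First I would apply Proposition~\ref{Prop: derivs_and_integral_formula} in the Euclidean setting $(M,g) = (\overline{\Omega},\delta_{ab})$, so that $\Delta_g = \Delta$. I take each $f_\ell \in C^{2,\alpha}_0(\Gamma)$ for $\ell = 0,1,\ldots,k+1$ and regard them as elements of $C^{2,\alpha}(\p\Omega)$ via extension by zero. Since the well-posedness and $C^k$-differentiability of the solution operator $S$ and of the full DN map $\Lambda_q$ established in Proposition~\ref{Prop: wellposedness_and_expansion} hold for any boundary data small in $C^{2,\alpha}(\p\Omega)$, the identity~\eqref{dm_lambdaq_identity} applies, giving
\begin{align*}
\lim_{\eps_0\to 0}\eps_0^{-\alpha}\int_{\p\Omega} (D^k\Lambda_q)_{\eps_0 f_0}(f_1,\ldots,f_k)\,f_{k+1}\,dS = c_r\int_\Omega q\,|v_0|^{r-1}v_0^{1-k}v_1\cdots v_{k+1}\,dx,
\end{align*}
where each $v_\ell$ is harmonic in $\Omega$ with $v_\ell|_{\p\Omega} = f_\ell$.

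The second step is to identify $(D^k\Lambda_q)_{\eps_0 f_0}(f_1,\ldots,f_k)$ on $\Gamma$ with $(D^k\Lambda_q^\Gamma)_{\eps_0 f_0}(f_1,\ldots,f_k)$. By definition, for $f\in C^{2,\alpha}_0(\Gamma)$ sufficiently small, the partial DN map acts as $\Lambda_q^\Gamma(f) = \Lambda_q(f)|_\Gamma$, where on the right we extend $f$ by zero. Since differentiation in the parameters $\eps_1,\ldots,\eps_k$ commutes with restriction to $\Gamma$, the $k$th Fr\'echet derivative of $\Lambda_q^\Gamma$ at $\eps_0 f_0$ applied to $(f_1,\ldots,f_k)$ agrees with the restriction to $\Gamma$ of $(D^k\Lambda_q)_{\eps_0 f_0}(f_1,\ldots,f_k)$. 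Because $f_{k+1}$ vanishes on $\p\Omega\setminus\Gamma$, we may replace $\int_{\p\Omega}$ by $\int_\Gamma$ and replace $D^k\Lambda_q$ by $D^k\Lambda_q^\Gamma$ without changing the value of the integral, yielding the desired identity~\eqref{dm_lambdaq_identity in Euclidean}.

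There is essentially no obstacle here: the only mild point to check is that the proof of Proposition~\ref{Prop: derivs_and_integral_formula} never used that the test boundary data $f_{k+1}$ has full support on $\p\Omega$; it was only used as the trace of an auxiliary harmonic function $v_{k+1}$ against which one integrates the equation satisfied by the mixed derivative $w^{\eps_0}$. Extending $f_{k+1} \in C^{2,\alpha}_0(\Gamma)$ by zero and solving $\Delta v_{k+1} = 0$ in $\Omega$ with $v_{k+1}|_{\p\Omega} = f_{k+1}$ is perfectly fine, and the integration by parts $\int_{\p\Omega} \p_\nu w^{\eps_0}\,f_{k+1}\,dS = \int_\Gamma \p_\nu w^{\eps_0}\,f_{k+1}\,dS$ is justified simply by the support condition on $f_{k+1}$.
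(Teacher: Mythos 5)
Your proposal is correct and is essentially the paper's own argument: the paper derives the corollary by directly invoking Proposition~\ref{Prop: derivs_and_integral_formula} with the boundary data $f_\ell \in C^{2,\alpha}_0(\Gamma)$ viewed as elements of $C^{2,\alpha}(\p\Omega)$, noting that the identity persists when the data are supported in $\Gamma$. Your additional remarks about restriction commuting with differentiation and the support of $f_{k+1}$ merely make explicit what the paper leaves implicit.
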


The result follows immediately from Proposition \ref{Prop: derivs_and_integral_formula}, even if the Dirichlet data is supported in a relatively open subset $\Gamma \subset\p \Omega$.


It is worth mentioning that even in the case $1<r<2$ we can use \emph{two} boundary functions $f_0$ and $f_1$. A suitable choice of the Dirichlet data $f_0$ allows us to get rid of the nonlinear term $|v_0|^\alpha$, if necessary, while still retaining the ability to choose $f_1$ and the auxiliary function $f_2$ in an appropriate way.

\begin{rmk}
We mention that for nonlinearities $a(x,u) = q(x) \abs{u}^{\alpha} u$ where $q \in C^{\alpha}(M)$ and $\alpha \in (0,1)$, one can prove that the solution of 
	\begin{align*}
	\begin{cases}
	\Delta u_\eps + q|u_\eps|^{\alpha}u_\eps=0 & \text{ in } M, \\
	u_\eps=\eps f & \text{ on }\p M,
	\end{cases}
	\end{align*}
	where $f \in C^{2,\alpha}(\p M)$ and $\eps > 0$ is small, has the asymptotic expansion 
	\begin{align*}
			u_\eps=\eps v + \eps^{1+\alpha} w+ O(\eps^{1+2\alpha}),
	\end{align*}
	where $v$ is the harmonic function satisfying 
	\begin{align*}
		\begin{cases}
		\Delta v=0 &\text{ in }M, \\
		v=f &\text{ on }\p M, 
		\end{cases}
	\end{align*}
	and $w$ is the solution of 
	\begin{align*}
		\begin{cases}
		\Delta w = - q|v|^{\alpha} v &\text{ in }M , \\
		w=0 &\text{ on }\p M.
		\end{cases}
	\end{align*}
	One could use such one-parameter asymptotic expansions to give alternative proofs of some of our full data inverse problems. However, we will instead use Proposition \ref{Prop: derivs_and_integral_formula} and Corollary \ref{Cor: derivs_and_integral_formula}, which are based on multiparameter expansions and will lead to more general results. For our proof of Theorem~\ref{Main Thm 6} it is crucial to use Proposition \ref{Prop: derivs_and_integral_formula} with $k\geq 3$.
\end{rmk}

\section{Global uniqueness in Euclidean space}\label{Sec 3}
In this section, let us prove our main Euclidean results. Recall that we are considering real-valued solutions. In order to apply the density results \cite{ferreira2009linearized,LLLS2019nonlinear} involving products of complex-valued harmonic functions, let us start with the following simple lemma also used in \cite{LLLS2019partial}:
\begin{lem} \label{lemma_complex_real_solutions}
Let $\Omega \subset \R^n$ be a bounded domain with $C^\infty$-smooth boundary $\p \Omega$, for $n \geq 2$. Let $f \in L^{\infty}(\Omega)$, $v_1, v_2 \in L^2(\Omega)$, and $v_3, \ldots, v_k \in L^{\infty}(\Omega)$ be complex valued functions where $k \geq 2$. Then 
\[
\int_{\Omega} f v_1 \cdots v_k \,dx = \sum_{j=1}^{2^k} \int_{\Omega} c_j f w_1^{(j)} \cdots w_k^{(j)} \,dx
\]
where $c_j\in \{\pm 1, \pm i \} $ and $w_1^{(j)}\in \{\mathrm{Re}(v_1), \mathrm{Im}(v_1) \}, \cdots, w_k^{(j)}\in \{\re(v_k), \im(v_k) \}$ for $1\leq j \leq 2^k$.
\end{lem}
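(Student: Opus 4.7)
The plan is to expand each complex-valued factor into its real and imaginary parts and then multiply out, which will produce exactly $2^k$ terms of the desired form. More precisely, I would write
\[
v_\ell = \re(v_\ell) + i\,\im(v_\ell), \qquad \ell = 1, \ldots, k,
\]
and then expand the product
\[
v_1 v_2 \cdots v_k = \prod_{\ell=1}^{k} \bigl( \re(v_\ell) + i\,\im(v_\ell) \bigr) = \sum_{\eps \in \{0,1\}^k} i^{|\eps|} \, w_1^{(\eps)} \cdots w_k^{(\eps)},
\]
where for a multi-index $\eps = (\eps_1, \ldots, \eps_k) \in \{0,1\}^k$ we set $|\eps| = \eps_1 + \cdots + \eps_k$, and $w_\ell^{(\eps)} = \re(v_\ell)$ if $\eps_\ell = 0$ and $w_\ell^{(\eps)} = \im(v_\ell)$ if $\eps_\ell = 1$. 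Enumerating the $2^k$ multi-indices $\eps$ as $j = 1, \ldots, 2^k$ and setting $c_j := i^{|\eps(j)|} \in \{1, i, -1, -i\}$ gives the pointwise identity claimed in the lemma.

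Next I would verify that termwise integration is justified. For each $\eps$, the factors satisfy $w_1^{(\eps)}, w_2^{(\eps)} \in L^2(\Omega)$ (since $\re$ and $\im$ preserve $L^2$), and $w_3^{(\eps)}, \ldots, w_k^{(\eps)} \in L^\infty(\Omega)$, while $f \in L^\infty(\Omega)$. By the Cauchy--Schwarz inequality applied to the first two factors, together with H\"older's inequality against the bounded factors, each product $f \, w_1^{(\eps)} \cdots w_k^{(\eps)}$ lies in $L^1(\Omega)$. Hence we may integrate the expansion of $v_1 \cdots v_k$ term by term, and linearity of the integral yields
\[
\int_\Omega f \, v_1 \cdots v_k \, dx = \sum_{j=1}^{2^k} \int_\Omega c_j \, f \, w_1^{(j)} \cdots w_k^{(j)} \, dx,
\]
as required. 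The only potential obstacle here is bookkeeping of the binary expansion, but no serious difficulty arises since the combinatorial identity is elementary and the integrability conditions are precisely tailored so that each of the $2^k$ integrals converges absolutely.
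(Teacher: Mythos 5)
Your proposal is correct and follows essentially the same route as the paper: write each $v_\ell = \re(v_\ell) + i\,\im(v_\ell)$, multiply out the product into $2^k$ terms, and integrate termwise. The extra integrability check (Cauchy--Schwarz on the two $L^2$ factors plus the $L^\infty$ bounds) is a sound, if brief, elaboration of what the paper leaves implicit.
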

\begin{proof}
The result follows by writing 
\[
\int_{M} f v_1 \cdots v_k \,dx = \int_{M} f (\re(v_1) + i \im(v_1)) \cdots (\re(v_k) + i \im(v_k)) \,dx 
\]
and by multiplying out the right hand side.
\end{proof}
Lemma~\ref{lemma_complex_real_solutions} also holds on Riemannian manifolds $(M,g)$, which will be applied in Section \ref{Sec 4}.

\begin{proof}[Proof of Theorem \ref{Main Thm 1}]
Since $\Lambda_{q_1}(f)=\Lambda_{q_2}(f)$ for all small $f$ and since $\Lambda_{q_j}$ is a $C^k$ map by Proposition \ref{Prop: wellposedness_and_expansion}, one has 
\[
\LC D^k \Lambda_{q_1}\RC _{\eps_0 f_0}(f_1, \ldots, f_k) = \LC D^k \Lambda_{q_2}\RC_{\eps_0 f_0}(f_1, \ldots, f_k)
\]
for all $f_0, \ldots, f_{k+1} \in C^{2,\alpha}(\p \Omega)$ and for $\eps_0$ small. The integral identity \eqref{dm_lambdaq_identity in Euclidean} applied with $q_1$ and $q_2$ implies that 
\[
\int_{\Omega} (q_1-q_2) \abs{v_0}^{r-1} v_0^{1-k} v_1 \cdots v_{k+1} \,dx = 0
\]
for any real-valued harmonic functions $v_0, \ldots, v_{k+1} \in C^{2,\alpha}(\ol{\Omega})$. Let $v_0=v_3 = \ldots = v_{k+1} = 1$ be constant functions in $\Omega$. Then 
\begin{equation} \label{product_pair_harmonic}
\int_{\Omega} (q_1-q_2) v_1 v_2 \,dx = 0
\end{equation}
whenever $v_j \in C^{2,\alpha}(\ol{\Omega})$ are real-valued and harmonic. Since the real and imaginary parts of a complex valued harmonic function are harmonic, it follows from Lemma \ref{lemma_complex_real_solutions} that \eqref{product_pair_harmonic} remains true for complex valued harmonic functions.

Now let $v_1(x)=e^{ (-\zeta+i\xi )\cdot x}$ and $v_2(x)=e^{(\zeta+i\xi )\cdot x}$ be Calder\'on's exponential solutions (see \cite{calderon}), which are harmonic, and where $\zeta,\xi \in \R^n$ with $|\zeta|=|\xi|$ and $\zeta\cdot \xi =0$. Then we have
	\begin{align}\label{Fourier transform of q_1 - q_2}
		\begin{split}
		0=&\int_{\Omega}(q_1-q_2)v_1 v_2 \, dx \\
		=& \int_{\Omega}(q_1-q_2)e^{(-\zeta+i\xi )\cdot x}e^{(\zeta+i\xi )\cdot x}\, dx \\
		=& \int_{\Omega}(q_1-q_2)e^{2i\xi \cdot x}\, dx.
		\end{split}
	\end{align}
	Thus, via \eqref{Fourier transform of q_1 - q_2}, we obtain that the Fourier transform of the difference $q_1-q_2$ at $-2\xi $ is zero. Since $\xi\in \R^n$ can be chosen arbitrarily, we must have $q_1=q_2$ as desired.
	
Let us give another proof of this result when $n \geq 3$ and when we only assume that $\Lambda_{q_1}(f)=\Lambda_{q_2}(f)$ for all small $f$ with $f \geq 0$. As before, let $f_0=f_3=\ldots=f_{k+1}=1$ so that $v_0=v_3=\ldots=v_{k+1}=1$ in $\Omega$. Then \eqref{product_pair_harmonic} holds whenever $f_1, f_2 \geq 0$. Let $x\not\in\overline{\Omega}$ and choose the boundary values $f_1,f_2$ so that $v_1(y)=v_2(y) = |x-y|^{2-n}$. Then $v_1,v_2>0$ are harmonic in $\Omega$.
Inserting these solutions to~\eqref{product_pair_harmonic} and writing $q = q_1-q_2$, we see that 
\begin{equation*}
\int_{\Omega} \abs{x-y}^{4-2n} q(y) \,dy = 0
\end{equation*}
for $x\not\in\overline{\Omega}$.
%
By \cite[page 79]{isakov1990inverse}, the knowledge of the Riesz potential 
\[
I_\beta \mu(x) = \int_{\Omega}|x-y|^{\beta} d\mu(y),
\]
for $x\not\in\overline{\Omega}$ uniquely determines the measure $\mu(y)$ in $\Omega$, when $\beta\neq 2k$ and $\beta+n\neq 2k+2$ for all $k=0,1,\ldots$. Since these conditions are satisfied for $\beta=4-2n$,  we see that $q=0$ by setting $d\mu(y)=q(y)\,dy$ above. Isakov~\cite{isakov1990inverse} credits M. Riesz~\cite{riesz1938} and M. M.~Lavrentiev~\cite{lavrentiev1967} for the first results about determination of a measure from the Riesz potential.
\end{proof}

	\begin{proof}[Proof of Theorem \ref{Main Thm 2}]
		Since the DN maps satisfy $\Lambda_{q_1}^\Gamma (f)= \Lambda_{q_2}^\Gamma(f)$ for any sufficiently small Dirichlet data $f\in C^{2,\alpha}_0(\Gamma)$, we have for any $f_0, \ldots, f_{k+1} \in C^{2,\alpha}_0(\Gamma)$ 
		\begin{align}\label{k-th DN agree in partial data}
			\lim_{\eps_0\to 0} \eps_0^{-\alpha} \int_{\p \Omega} \LC D^k \Lambda_{q_1}^\Gamma  -D^k \Lambda_{q_2}^\Gamma  \RC_{\eps_0 f_0} (f_1,\ldots, f_k) f_{k+1}\, dS=0.
		\end{align}
		Therefore, by subtracting the integral identity \eqref{dm_lambdaq_identity in Euclidean} for $q=q_1, q_2$ and inserting \eqref{k-th DN agree in partial data}, one has 
		\begin{align*}
			\int_{\Omega } (q_1-q_2)|v_{0}|^{r-1}v_0^{1-k} v_{1}\ldots v_{k+1}\, dx=0,
		\end{align*}
	where $v_{\ell}$ are the solutions of \eqref{first linearized harmonic on manifold} in $\Omega$ for $\ell=0,1,\ldots, k+1$ with $v_{\ell}|_{\p \Omega} = f_{\ell}$. Write $F := (q_1-q_2)|v_{0}|^{r-1}v_0^{1-k} v_3 \ldots v_{k+1}$, so that we have 
	\[
	\int_{\Omega} F v_1 v_2 \,dx = 0.
	\]
	By applying Lemma~\ref{lemma_complex_real_solutions}, we see that the last identity is valid for complex-valued harmonic functions $v_1, v_2 \in C^{2,\alpha}(\ol{\Omega})$ with $\supp(v_{\ell}|_{\p \Omega}) \subset \Gamma$. 
	On the other hand, via the density result of \cite{ferreira2009linearized}, one can choose $\left\{ v_{1}v_{2} \right\}$ to form a dense subset in $L^1(\Omega)$ with $\supp(v_{1}|_{\p \Omega}), \supp(v_{2}|_{\p \Omega}) \subset \Gamma$.  This implies that $F = 0$ in $\Omega$. Finally, by choosing  $f_0,f_3,\ldots ,f_{k+1} \not \equiv 0$ to be nonnegative Dirichlet data supported in $\Gamma$, we see that $v_0, v_3, \ldots, v_{k+1}$ are positive in $\Omega$ by the maximum principle. Thus one can conclude that $q_1 =q_2 $ in $\Omega$.
	\end{proof}

	Next we prove Theorem \ref{Main Thm 3}.
	
	\begin{proof}[Proof of Theorem \ref{Main Thm 3}]
		Via Proposition \ref{Prop: wellposedness_and_expansion}, 
		let $u_j\in C^{2,\alpha}(\overline{\Omega})$, for $j=1,2$, be the unique (small) solutions to 
		\begin{align}\label{equation of different parameters_thm13}
			\begin{cases}
				\Delta u_j +a_j(x,u_j) =0 &\text{ in }\Omega,\\
				u_j =\eps_0f_0+\eps_1 f_1  &\text{ on }\p \Omega,
			\end{cases}
		\end{align}
		where $\eps_\ell\geq 0$ are small parameters and $f_\ell \in C^{2,\alpha}_0(\Gamma)$, for $\ell =0,1$.
		Then, as in equation \eqref{eq:first linearization of S} in the proof of Proposition~\ref{Prop: derivs_and_integral_formula}, we have that 
		the first linearization of the solution map $S_j$ to~\eqref{equation of different parameters_thm13}, $j=1,2$, at $h_0:=\eps_0f_0$ satisfies 
\begin{equation*}
		v^{\eps_0}_{j,1}:= (DS_j)_{h_0}(f_1) =\left. \p_{\eps_1} u_{j}\right|_{\eps_1=0} 
\end{equation*}
		where $v_{j,1}^{\eps_0}$ satisfies 
		\begin{align}\label{first_lin_thm13}
			\begin{cases}
				\Delta v^{\eps_0}_{j,1} =-\p_ya_j(x,\left. u_j\right|_{\eps_1=0}) v^{\eps_0}_{j,1} &\text{ in }\Omega,\\
				v^{\eps_0}_{j,1} = f_1 &\text{ on }\p \Omega,
			\end{cases}
		\end{align}		
		for $j=1,2$.
		Analogously to \eqref{v ell epsilon limit} in the proof of Proposition~\ref{Prop: derivs_and_integral_formula}, one has 
		\[
		v^{\eps_0}_{j,1}\to v_1 \text{ in } C^{2,\alpha}(\ol{\Omega}), \quad \text{ as }\eps_0 \to 0,
		\]
		where $v_{1}$ solves $\Delta v_1=0$ in $\Omega$ and $v_1|_{\p \Omega}=f_1$.
		
		Fix $f_2 \in C^{2,\alpha}_0(\Gamma)$ and let $v_2$ solve $\Delta v_2=0$ in $\Omega$ with $\left. v_2\right|_{\p\Omega}=f_2$. Since $\Lambda_{a_1}^\Gamma(f)=\Lambda_{a_2}^\Gamma(f)$ for any sufficiently small $f\in C^{2,\alpha}_0(\Gamma)$, integration by parts and \eqref{first_lin_thm13} yield that 
		\begin{equation}\label{Integral id for partial data}
		\begin{split}
			0=&\left.\p_{\eps_1}\right|_{\eps_1=0}\LC \int_{\p \Omega}f_2\LC \Lambda_{a_1}^\Gamma-\Lambda_{a_2}^\Gamma\RC \LC \eps_0f_0+\eps_1 f_1\RC\, dS \RC  \\
			=& \left. \p_{\eps_1}\right|_{\eps_1=0}\LC \int_\Omega v_{2}\LC\Delta u_1-\Delta u_2 \RC dx\RC + \left. \p_{\eps_1} \right|_{\eps_1=0}\LC \int_\Omega \nabla v_{2}\cdot\nabla \LC  u_1-u_2 \RC dx \RC \\
			=&-\int_\Omega v_{2} \left. \p_{\eps_1}\right|_{\eps_1=0}\LC a_1(x,u_1)-a_2(x,u_2)\RC\, dx\\
			&+\left. \p_{\eps_1}\right|_{\eps_1=0}\LC \int_{\p \Omega} \p_\nu v_{2} \LC u_1|_{\p \Omega}-u_2|_{\p \Omega}\RC \, dS \RC \\
			=&-\int_\Omega v_{2} \LC \p_ya_1(x,u_1|_{\eps_1=0}) v^{\eps_0}_{1,1}-\p_ya_2(x,u_2|_{\eps_1=0}) v^{\eps_0}_{2,1}\RC   dx\\
			&+\int_{\p \Omega} \p_\nu v_{2} \LC f_1-f_1\RC  dS\\
			=&-\int_\Omega v_{2} \LC \p_ya_1(x,u_1|_{\eps_1=0}) v^{\eps_0}_{1,1}-\p_ya_2(x,u_2|_{\eps_1=0}) v^{\eps_0}_{2,1}\RC  dx.
		\end{split}
		\end{equation}
		
		For $j=1,2$, the function 
		\[w_j := \left. u_j\right|_{\eps_1=0}
		\]
		now solves 
		\begin{align*}
			\begin{cases}
				\Delta w_j +a_j(x,w_j) =0 &\text{ in }\Omega,\\
				w_j =\eps_0f_0  &\text{ on }\p \Omega.
			\end{cases}
		\end{align*}
		By \eqref{stability estimate in well-posedness} we have 
		\[
		\norm{w_j}_{C^{2,\alpha}(\ol{\Omega})} \leq C \eps_0 \norm{f_0}_{C^{2,\alpha}(\p \Omega)}.
		\]
		Since $\Delta(w_j - \eps_0 v_0) = -a_j(x,w_j)$ in $\Omega$ with $w_j - \eps_0 v_0|_{\p \Omega} = 0$, Schauder estimates imply that 
		\[
		\norm{w_j-\eps_0v_0}_{C^{2,\alpha}(\ol{\Omega})} \leq C \norm{a_j(x,w_j)}_{C^{\alpha}(\ol{\Omega})}.
		\]
		Using the Taylor formula as in \eqref{Taylor formula in Section 2} together with the conditions $$
		a_j(x,0) = \p_y a_j(x,0) = 0
		$$ 
		gives that 
		\[
		a_j(x, w_j(x)) = w_j(x) \int_0^1 \LC \p_y a_j(x, tw_j(x)) - \p_y a_j(x,0)  \RC  \,dt.
		\]
		We may now apply \eqref{dkua_holder_estimate} with $b$ replaced by $a_j$ to obtain that 
\begin{equation}\label{u_j expansion}
		\begin{split}
		\norm{w_j-\eps_0 v_0}_{C^{2,\alpha}(\ol{\Omega})} &\leq C \norm{w_j}_{C^{\alpha}(\ol{\Omega})} \int_0^1 \left\|\p_y a_j(x, tw_j) - \p_y a_j(x,0)\right\|_{C^{\alpha}(\ol{\Omega})} \,dt \\
		&= o(\eps_0)
		\end{split}
\end{equation}
		as $\eps_0 \to 0$. 
		
		We have by assumption $a_j(x,y) \sim \displaystyle \sum_{l=1}^\infty b_{j,l} (x,y)$,
%
		where each $b_{j,l}(\ccdot,y)\in C^\alpha(\overline{\Omega})$ is homogeneous of order $r_l>1$ with respect to the variable $y\in \R$, for $l \geq 1$. Let us also write $\beta_{j,N} := a_j -\displaystyle \sum_{l=1}^{N-1} b_{j,l}$ for $j=1,2$ and $N \geq 1$, with $\beta_{j,1} = a_j$. Then $\beta_{j,N}$ is in $C^{1,\alpha}_{\mathrm{loc}}(\mR, C^{\alpha}(\ol{\Omega}))$ as in Definition~\ref{polyhomogeneous}. It follows from \eqref{beta_norm_bound} that, in particular, 
 \begin{equation*}
  \left\Vert \p_y a_j(\,\cdot\,,y) - \sum_{l=1}^{N-1} \p_y b_{j,l}(\,\cdot\,,y) \right\Vert_{L^{\infty}(\Omega)}\leq C_N\abs{y}^{r_N-1}, \qquad \abs{y} \leq 1,
 \end{equation*}
for $j=1,2$.

 We apply the above with $N=2$ and $y=w_j(x) = u_j(x)|_{\eps_1=0}$ to have for $x\in \overline{\Omega}$, for $j=1,2$ that
 \[
		\left| \p_ya_j(x,w_j)-\p_yb_{j,1}(x,w_j)\right|\leq C_2 \left|w_j\right|^{r_2-1} \leq C \eps_0^{r_2-1}.
				\]
		Multiplying this by $\eps_0^{-r_1+1}$ and using the facts that $r_2 > r_1$ and $\p_yb_{j,1}(x,y)$ is homogeneous of order $r_1-1$ in $y$, we obtain in $L^{\infty}(\Omega)$ that
                \[
		\lim_{\eps_0\to 0} \eps_0^{-r_1+1}\p_ya_j(x,w_j)=\lim_{\eps_0\to 0}\p_yb_{j,1}(x,\eps_0^{-1}w_j)=\p_yb_{j,1}(x,v_0).
		\]
		Here in the last equality we additionally used~\eqref{u_j expansion}.
		Recall that we also have that the limit $\displaystyle \lim_{\eps_0\to 0} v_{j,1}^{\eps_0}= v_{1}$ in $C^{2,\alpha}(\ol{\Omega})$, for both $j=1,2$. Hence, we obtain
		\begin{equation*}
		\begin{split}
			0&=\lim_{\eps_0\to 0}\eps_0^{-r_1+1}\int_\Omega v_{2} \left[ \p_ya_1(x,u_1|_{\eps_1=0}) v^{\eps_0}_{1,1}-\p_ya_2(x,u_2|_{\eps_1=0}) v^{\eps_0}_{2,1}\right] dx \\
			&=\int_\Omega \left[\p_yb_{1,1}(x,v_0)-\p_yb_{2,1}(x,v_0)\right] v_{1}v_{2}\, dx.
			\end{split}
		\end{equation*}
		Via the density result of~\cite{ferreira2009linearized}, products $v_1 v_2$ of pairs of harmonic functions with boundary values supported in $\Gamma\subset \p \Omega$ are dense in $L^1(\Omega)$. Therefore, we must have 
		\[
		\p_yb_{1,1}(x,v_0)=\p_yb_{2,1}(x,v_0), \text{ for }x \in \Omega.
		\]
		
		In addition, notice that the boundary value $f_0\in C_0^{2,\alpha}(\Gamma)$ has been arbitrary so far. Let $x_0\in \Omega$, let $y_0\in \R$ and let us choose by Runge approximation (see e.g.~\cite[Proposition A.2]{lassas2018poisson}) a boundary value $f_0=f_{0,x_0}\in C_0^\infty(\Gamma)$ so that 
		\begin{equation}\label{Runge_approx}
		v_0(x_0)=y_0.		
		\end{equation}
		We deduce that 
		\[
		\p_yb_{1,1}(x_0,y_0)=\p_yb_{2,1}(x_0,y_0)
		\]
		for any $x_0\in \Omega$ and any $y_0$. Thus we have $\p_yb_{1,1}=\p_yb_{2,1}$. By Euler's homogeneous function theorem, we have 
		\begin{equation*}
		 b_{1,1}(x,y)=\frac{y}{r_1}\s \p_yb_{1,1}(x,y)=\frac{y}{r_1}\s \p_yb_{2,1}(x,y)=b_{2,1}(x,y),
		\end{equation*}
		where $r_1>1$ is the degree of homogeneity for $b_{j,1}(x,y)$ with respect to the $y$-variable, for $j=1,2$. Thus $b_{1,1} = b_{2,1}$.
		
		We proceed by induction on the index $l \in \N$ of $b_{j,l}$, $j=1,2$, to show that $b_{1,l}=b_{2,l}$ for any $l \in \N$. We have already shown the case $l=1$. Let us then make the induction assumption that $b_{1,l}=b_{2,l}$ for $l=1,\ldots,L$, for some $L\in \N$. Then, we have that 
		
		\begin{align*}
		&\abs{(\p_ya_1(x,y)-\p_ya_2(x,y))-(\p_yb_{1,L+1}(x,y)-\p_yb_{2,L+1}(x,y))} \\
		=&\Bigg|\big(\p_ya_1(x,y)-\p_ya_2(x,y)\big)-\sum_{l=1}^{L} \p_yb_{1,l}(x,y)+ \sum_{l=1}^{L} \p_yb_{2,l}(x,y)\\
		&\quad -\big(\p_yb_{1,L+1}(x,y)-\p_yb_{2,L+1}(x,y)\big)\Bigg|
		\\
		= &\left|\LC \p_ya_1(x,y)-\sum_{l=1}^{L+1} \p_yb_{1,l}(x,y)\RC- \LC \p_ya_2(x,y)-\sum_{l=1}^{L+1} \p_yb_{2,l}(x,y) \RC \right| \\
		=&\left| \p_y\beta_{1,L+2}(x,y)- \p_y\beta_{2,L+2}(x,y) 
		%
		 \right| 
		\leq 2\s  C_{L+2}\abs{y}^{r_{L+2}-1}. 
		\end{align*}
		Here we used the induction assumption in the first equality.
		Applying this for $y=w_j(x) = u_j(x)|_{\eps_1=0}$ we have for $x\in \overline{\Omega}$, and for $j=1,2$, that
		\[
		 \left| (\p_ya_1(x,w_j)-\p_ya_2(x,w_j))-(\p_yb_{1,L+1}(x,w_j)-\p_yb_{2,L+1}(x,w_j)) \right|\leq C \eps_0^{r_{L+2}-1},
		\]
		for some constant $C>0$.
		Here we used again $\norm{w_j}_{C^{2,\alpha}(\ol{\Omega})} \leq C \eps_0 \norm{f_0}_{C^{2,\alpha}(\p \Omega)}$

		Therefore, by using~\eqref{u_j expansion}, homogeneity and  $r_{L+2}>r_{L+1}$, we obtain in $L^{\infty}(\Omega)$ that 
	     \begin{align*}
	     	& \lim_{\eps_0\to 0}\eps_0^{-r_{L+1}+1}\LC \p_ya_1(x,u_1|_{\eps_1=0})-\p_ya_2(x,u_2|_{\eps_1=0})\RC \\
	     	=&\lim_{\eps_0\to 0}\LC \p_yb_{1,L+1}(x,\eps_0^{-1}w_1)-\p_yb_{2,L+1}(x,\eps_0^{-1}w_2)\RC \\
	     	=&\p_yb_{1,L+1}(x,v_0)-\p_yb_{2,L+1}(x,v_0).
	     \end{align*}
	    By repeating the arguments we used to prove the special case $N=2$, which especially use the integral identity~\eqref{Integral id for partial data} and ~\cite{ferreira2009linearized}, we obtain 
		\[
		\p_yb_{1,L+1}=\p_yb_{2,L+1}.
		\]
		By Euler's homogeneous function theorem again, we then have $b_{1,L+1}=b_{2,L+1}$ in $\Omega$ as desired, which concludes the induction step and the proof of the theorem.
	\end{proof}		
\begin{rmk}
In the previous proof we recovered the expansion coefficients $b_{l}(x,y)$ of the potential $a\sim \sum_{l=1}^\infty b_l $ at arbitrary point $(x_0,y_0)\in \Omega\times \R$. This was done by using Runge approximation (see \eqref{Runge_approx}) to select a boundary value $f_0$ so that the corresponding solution $v_0$ satisfies $v_0(x_0)=y_0$. 
This is slightly different from earlier results in \cite{LLLS2019nonlinear,LLLS2019partial,KU2019partial}, where one recovers the Taylor coefficients $\tilde{b}_l(x,y):=\p^l_y \tilde{a}(x,y)$ of an unknown smooth potential $\tilde{a}(x,y)$ only at $y=0$, $x\in \Omega$.
\end{rmk}

In the end of this section, let us prove the simultaneous recovery of an obstacle and a potential.

\begin{proof}[Proof of Theorem \ref{Main Thm 4}]
For $\ell=0,1$, let $\eps_\ell\geq 0$ be sufficiently small parameters, and $f_\ell\in C^{2,\alpha}_0 (\Gamma)$. Consider the Dirichlet data $f=\eps_0f_0+\eps_1f_1$ and let $u_j=u_j(x)$ be the solution of
	\begin{align}\label{equation of different parameters_thm14}
		\begin{cases}
		\Delta u_j + a_j (x,u_j)=0 & \text{ in }\Omega ,\\ 
		u_j =0 & \text{ on }\p D_j, \\
		u_j =\displaystyle f &\text{ on }\p \Omega,
		\end{cases}
	\end{align}
	for $j=1,2$, where $a_j=a_j(x,z)$ are polyhomogeneous in the sense of Definition~\ref{polyhomogeneous} with $x \in \Omega \setminus \overline{D_j}$. We first show that $D_1=D_2$ and then recover the coefficients similarly as in the proof of Theorem \ref{Main Thm 3}.
	
	\vspace{3mm}
	
	{\it Step 1. Recovering the obstacle.}

    \vspace{3mm}
	
\noindent	
As in the proof of Proposition~\ref{Prop: derivs_and_integral_formula}, see~\eqref{eq:first linearization of S}, we have that the first linearization of the solution map $S_j$ to~\eqref{equation of different parameters_thm14}, $j=1,2$, at $h_0:=\eps_0f_0$ satisfies 
\begin{equation*}
		v^{\eps_0}_{j,\ell}:= (DS_j)_{h_0}(f_\ell) =\left. \p_{\eps_\ell} u_{j}\right|_{\eps'=0}, 
\end{equation*}
where $v^{\eps_0}_{j,\ell}$ is the solution of
		\begin{align*}
			\begin{cases}
				\Delta_g v^{\eps_0}_{j,\ell} = -\p_ya_j(x,\left. u_j\right|_{\eps'=0}) v^{\eps_0}_{j,\ell} &\text{ in }\Omega,\\
				v^{\eps_0}_{j,\ell} = 0 &\text{ on } \p D_j,\\
				v^{\eps_0}_{j,\ell} = f_\ell &\text{ on }\p \Omega.
			\end{cases}
		\end{align*}
		Analogously to \eqref{v ell epsilon limit} in the proof of Proposition~\ref{Prop: derivs_and_integral_formula}, one has 
%
		\[
		v^{\eps_0}_{j,\ell}\to v_j^{(\ell)} \text{ in } C^{2,\alpha}(\ol{\Omega}\setminus D_j), \quad \text{ as }\eps_0 \to 0,
		\]
where
	\begin{align*}
		\begin{cases}
		\Delta v_j^{(\ell)}=0 & \text{ in }\Omega \setminus \overline{D_j}, \\
		v_j^{(\ell)}=0 & \text{ on }\p D_j, \\
		v_j^{(\ell)}=f_\ell & \text{ on }\p \Omega
		\end{cases}
	\end{align*}
	for $j=1,2$ and $\ell =0,1$. The rest of the proof is the analogous to the proof of \cite[Theorem 1.2]{LLLS2019partial}. (See also \cite[Theorem 1.6]{KU2019partial}.) For the sake of completeness, we offer details of the proof below.
	
	 Let $G$ be the connected connected component of $\Omega\setminus (\overline{D_1 \cup D_2})$, whose boundary contains $\p \Omega$. Consider the function $\wt v^{(\ell)}:=v_1^{(\ell)}-v_2^{(\ell)}$, which solves
	\begin{align*}
		\begin{cases}
		\Delta \wt v^{(\ell)}=0 &\text{ in }G,\\
		\wt v^{(\ell)}=\p_\nu \wt v^{(\ell)}=0 &\text{ on }\Gamma,
		\end{cases}
	\end{align*}
	where we have used that $\Lambda_{a_1,D_1}^\Gamma (f)=\Lambda_{a_2,D_2}^\Gamma (f)$, which holds for all sufficiently small Dirichlet data $f \in C^{2,\alpha}_0 (\Gamma)$. By the unique continuation of harmonic functions this yields that $\wt v^{(\ell)}=0$ in $G$. That is, for $\ell=0,1$, we have 
	\begin{align}\label{equalsontilde}
		v_1^{(\ell)}=v_2^{(\ell)} \text{ in }G. 
	\end{align} 
	
	We use a contradiction argument to prove $D_1 =D_2$. For this, let us assume that $D_1\neq D_2$. Note that the connected component $G\neq \emptyset$. By using \cite[Lemma A.3]{LLLS2019partial}, there exists
	\[
	x_1 \in \p G \cap (\Omega \setminus \overline{D_1})\cap \p D_2.
	\]
    Since $x_1 \in \p D_2$, we have $v_2^{(\ell)}(x_1)=0$. By~\eqref{equalsontilde} and continuity, we also have that $v_1^{(\ell)}(x_1)=0$. Note that $x_1$ is an interior point of the open set $\Omega\setminus \overline{D_1}$. 
    
    We next fix one of the boundary values $f_\ell$ to be non-negative and not identically $0$.
	Since $v_1^{(\ell)}(x_1)=0$, the maximum principle implies that $v_1^{(\ell)}\equiv 0$ in $\Omega \setminus \overline{D_1}$, which contradicts to the assumption that
	$v_1^{(\ell)}=f_\ell$ on $\p \Omega$  is not identically zero (because the harmonic function $v_1^{(\ell)}$ is continuous up to boundary). This shows that  
	$$
	D:=D_1=D_2.
	$$ 
%
	
	{\it Step 2. Recovering the coefficient.}
	
	\vspace{3mm}
    
 \noindent Since we have proved that $D_1 = D_2 = D$, it follows that the partial data Dirichlet-to-Neumann maps for the equations $\Delta u + a_j(x,u) = 0$ in $\Omega \setminus \ol{D}$ agree on $\Gamma$. Applying Theorem \ref{Main Thm 3} in the connected set $\Omega \setminus \ol{D}$ then implies that $b_{1,l} = b_{2,l}$ for all $l\in \N$. This concludes the proof. 
\end{proof}
 
%

\section{Global uniqueness in Riemannian manifolds}\label{Sec 4}

In this last section of this paper, we prove Theorem \ref{Main Thm 5} and Theorem \ref{Main Thm 6}. 
In our earlier work~\cite{LLLS2019nonlinear}, we proved similar theorems for power type nonlinearities, with integer exponents. 
We begin with the proof of Theorem \ref{Main Thm 5}.

\begin{proof}[Proof of Theorem \ref{Main Thm 5}]
	The proof is similar to the proof of \cite[Theorem 1.2]{LLLS2019nonlinear}. We first recover the manifold and the its conformal class by the first linearization. After that we use the integral identity \eqref{dm_lambdaq_identity} to recover the potential.
	
		\vspace{3mm}
	
	{\it Step 1. Recovering the conformal manifold.}

	\vspace{3mm}
	
	\noindent  By using Proposition~\ref{Prop: wellposedness_and_expansion}, the equality $\Lambda_{M_1,g_1,q_1} (f) = \Lambda_{M_2,g_2,q_2}(f)$, for all $f\in C^{2,\alpha}(\p M)$ with $\norm{f}_{C^{2,\alpha}(\p M)}\leq \delta$, where $\delta>0$ is a sufficiently small number, implies
	\[
	\LC D\Lambda_{M_1,g_1,q_1} \RC_0 = \LC D\Lambda_{M_2,g_2,q_2} \RC_0.
	\]
	Here, for $j=1,2$, the maps $\LC D\Lambda_{M_j,g_j,q_j} \RC_0$ are the DN maps of the linearizations of the equations $\Delta_{g_j} u_j +q_j |u_j|^{r-1}u_j=0  \text{ in }M_j$ at a boundary value $f=0$. 
	This implies that the DN maps on $\p M$ of the first linearized equation 
	\begin{align*}
		\begin{cases}
			\Delta_{g_j} v_j =0 & \text{ in }M_j, \\
			v_j=f & \text{ on } \p M
		\end{cases}
	\end{align*}
	agree on $\p M$. That is, we know the DN maps on $\p M$ of the anisotropic Calder\'on problem on two-dimensional Riemannian manifolds. Thus, as noted in the proof of \cite[Theorem 1.2]{LLLS2019nonlinear}, we may use~\cite[Theorem 5.1]{lassas2018poisson} to determine the manifold and the Riemannian metric up to a conformal transformation:  There exists a $C^\infty$ smooth diffeomorphism $J:M_1 \to M_2$ such that 
	\begin{equation*}
		\sigma J^*g_2= g_1 \text{ in } M_1
	\end{equation*}
	with $J|_{\p M}=\Id$. Here the function $\sigma\in C^\infty(M_1)$ is positive with $\sigma |_{\p M} =1$.
	
		\vspace{3mm}
	
	{\it Step 2. Recovering the potential.}
	
	\vspace{3mm}
	
	\noindent 
	Let us transform the equation
	%
	$
	\Delta_{g_2} u_2 +q_2 \abs{u_2}^{r-1}u_2=0 
	$
	from the manifold $(M_2,g_2)$ into the manifold $(M_1,g_1)$ as follows. We denote in $M_1$
	\[
	\widetilde{q}_2=\sigma^{-1}(q_2\circ J) \equiv \sigma^{-1}J^*q_2.
	\]
	Let $u_2$ be the solution to
	\begin{align}\label{equation of u_2 in 2D manifold}
		\begin{cases}
				\Delta_{g_2}u_2+q_2|u_2|^{r-1}u_2=0 & \text{ in }M_2, \\
				u_2=f &\text { on } \p M,
		\end{cases}
	\end{align}
	where $f\in  C^{2,\alpha}(\p M)$ with $\norm{f}_{C^{2,\alpha}(\p M)}\leq \delta$, $\delta>0$ sufficiently small. Let us define 
	\[
	\widetilde{u}_2:=
				J^*u_2\equiv u_2\circ J,
	\]
	in $M_1$. Then  $\widetilde{u}_2$ satisfies in $M_1$
	\begin{align*}
		&\Delta_{g_1}\widetilde{u}_2+\widetilde{q}_2|\widetilde{u}_2|^{r-1}\widetilde{u}_2\\
		=&\Delta_{\sigma J^*g_2}\widetilde{u}_2 +\widetilde{q}_2|\widetilde{u}_2|^{r-1}\widetilde{u}_2 \\
		=&\sigma ^{-1}\Delta_{J^*g_2}\widetilde{u}_2 +\sigma^{-1}(J^*q_2)|\widetilde{u}_2|^{r-1}\widetilde{u}_2\\
		=&\sigma ^{-1}J^*(\Delta_{g_2}u_2)+ \sigma^{-1}(J^*q_2)|J^*u_2|^{r-1}J^\ast u_2 \\
		=&\sigma^{-1}J^*\left(\Delta_{g_2} u_2  + q_2|u_2|^{r-1}u_2\right).
	\end{align*}
	Here we used the conformal invariance of the Laplace-Beltrami operator in two dimensions and the coordinate invariance of Laplace-Beltrami operator in the second and third equality respectively.
	Therefore, one has 
	\begin{align}\label{conformal change}
		\begin{cases}
			\Delta_{g_1}\widetilde{u}_2+\widetilde{q}_2|\widetilde{u}_2|^{r-1}\widetilde{u}_2=0 & \text{ in }M_1,\\
			\widetilde{u}_2=f & \text{ on }\p M,
		\end{cases}
	\end{align}
	where we have used that $u_2$ is the solution of \eqref{equation of u_2 in 2D manifold}, $f\in C^{2,\alpha}(\p M)$ and $J|_{\p M}=\text{Id}$.
	
	Let $u_1$ be the solution to the nonlinear equation $\Delta_{g_1} u_1 +q_1 \abs{u_1}^{r-1}u_1=0$
	in $M_1$ with potential $q_1$ and boundary data $f$. We show next that
	\begin{equation}\label{DN_maps_on_single_manifold}
		\p_{\nu_1} u_1=\p_{\nu_1}\widetilde{u}_2 \text{ on }\p M.
	\end{equation}
	Via the assumption that $\Lambda_{M_1,g_1,q_1}(f)=\Lambda_{M_2,g_2,q_2}(f)$, it follows that if $u_1 =u_2=f\in C^{2,\alpha}(\p M)$  on  $\p M$, then 
	\begin{equation}\label{DNmapsgive}
		\p_{\nu_1}u_1=\p_{\nu_2}u_2 \text{ on }\p M.
	\end{equation}
	We compute that 
	\begin{equation}\label{norm_calc}
		\p_{\nu_2}u_2=\nu_2\cdot du_2=\nu_2\cdot d(u_2\circ J \circ J^{-1})=(J^{-1}_*\nu_2)\cdot d\widetilde{u}_2=\nu_1\cdot d\widetilde{u}_2=\p_{\nu_1} \widetilde{u}_2,
	\end{equation}
	where $\cdot$ denotes the canonical pairing between vectors and covectors, and $d$ is the exterior derivative of a function. For example $\nu_2\cdot d u_2=g(\nu_2,\nabla u_2)=\sum_{k=1}^2 \nu_2^k\, \p_k u_2$.
	We used that $J:M_1 \to M_2$ is conformal diffeomorphism, $\sigma J^*g_2= g_1$, with $J|_{\p M}=\text{Id}$ and $\sigma|_{\p M} =1$ in \eqref{norm_calc}. Combining~\eqref{DNmapsgive} and~\eqref{norm_calc}, we have \eqref{DN_maps_on_single_manifold} as claimed. 
	
	We have by \eqref{DN_maps_on_single_manifold} that 
	\begin{equation}\label{transfd_DN_maps_agree}
		\Lambda_{M_1,g_1,q_1}(f) =\p_{\nu_1} u_1=\p_{\nu_1}\widetilde{u}_2 = \widetilde{\Lambda}_{M_1,g_1,\widetilde{q}_2}(f),
	\end{equation}
	for all $f\in C^{2,\alpha}(\p M)$ with $\norm{f}_{C^{2,\alpha}(\p M)}\leq \delta$, 
	where $\widetilde{\Lambda}_{M_1,g_1,\widetilde{q}_2}$ denotes the DN map of the Dirichlet problem \eqref{conformal change} on $\p M$.

	We apply Proposition~\ref{Prop: derivs_and_integral_formula} on $(M_1,g_1)$, the DN maps $\Lambda_{M_1,g_1,q_1}$ and $\widetilde{\Lambda}_{M_1,g_1,\widetilde{q}_2}$, which agree by~\eqref{transfd_DN_maps_agree}. By Proposition~\ref{Prop: wellposedness_and_expansion} we have 
	\[
	\lim_{\eps_0\to 0}\eps_0^{-\alpha}\left. \LC D^k \Lambda_{M_1,g_1,q_1}\RC \right|_{\eps_0f_0} =\lim_{\eps_0\to 0}\eps_0^{-\alpha} \left. \LC D^k \widetilde{\Lambda}_{M_1,g_1,\widetilde{q}_2}\RC \right|_{\eps_0f_0} \text{ on }\p M,
	\]
	and by Proposition~\ref{Prop: derivs_and_integral_formula}
	\[
	\int_{M_1} (q_1-\widetilde{q}_2) \abs{v_{0}}^{r-1}v_0^{1-k} v_{1}\cdots v_{k+1} \,dV = 0,
	\]
	%
	%
	where $v_0,\,v_1,\cdots ,v_k \in C^{2,\alpha}(M_1)$ are harmonic functions in $(M_1,g_1)$ with $r=k+\alpha>1$. We can choose $v_0 =v_1=\cdots=v_{k-2} =1$ in $M_1$, hence
	\[
	\int_{M_1} (q_1-\widetilde{q}_2) v_{k-1} v_k \,dV = 0
	\]
	for any harmonic functions $v_{k-1}$ and $v_k$ in $M_1$.
	
	
	 By choosing $v_{k-1}$ and $v_k$ to be complex geometrical optics solutions constructed in \cite{guillarmou2011calderon} (see the proof of Proposition 5.1 in \cite{guillarmou2011calderon}), we conclude that 
	\[
	q_1 = \widetilde{q}_2 \text{ in }M_1.
	\]  
	We point out that the construction in~\cite{guillarmou2011calderon} can be simplified in our case where $v_{k-1}$ and $v_k$ are harmonic. In such case, Carleman estimates are not needed and the construction in \cite{guillarmousalotzou_complex} would suffice. 
	We have proven the claim. 
\end{proof}

\begin{proof}[Proof of Theorem \ref{Main Thm 6}]
		Let us write $r=k+\alpha$, $k\in \N$, $k\geq 3$ and $\alpha\in (0,1)$. For $j=1,2$, consider $\Lambda_{q_j}$ to be the DN map for the equation $\Delta_g u_j + q_j |u_j|^{r-1} u_j = 0$ in $M$. If $\Lambda_{q_1}(f) = \Lambda_{q_2}(f)$ for any sufficiently small $f\in C^{2,\alpha}(\p M)$, then by Proposition~\ref{Prop: wellposedness_and_expansion}
		\begin{align*}
			\lim_{\eps_0\to 0}\eps_0^{-\alpha}\Big( D^k \Lambda_{q_1}\Big)_{\eps_0f_0} =\lim_{\eps_0\to 0}\eps_0^{-\alpha} \Big( D^k \Lambda_{q_2} \Big)_{\eps_0f_0}. 
		\end{align*}
		 Hence, by Proposition \ref{Prop: derivs_and_integral_formula}, we have
	\[
	\int_M (q_1-q_2) \abs{v_{0}}^{r-1}v_0^{1-k} v_{1}\cdots v_{k+1}\,dV = 0,
	\]
	where $v_j \in C^{2,\alpha}(M)$ are harmonic functions in $M$. 
	Therefore, by choosing $v_0\equiv 1$ and by using \cite[Proposition 5.1]{LLLS2019nonlinear}, one obtains that $q_1 = q_2$ in $M$, as desired.
\end{proof}

\vspace{10pt}

\noindent {\bf Acknowledgments.} 
T. L. and M. S. are supported by the Finnish Centre of Excellence in Inverse Modelling and Imaging, Academy of Finland grant 284715, and T.T. by grant 312119. M.S.\ was also supported by the Academy of Finland (grant 309963) and by the European Research Council under Horizon 2020 (ERC CoG 770924). Y.-H. L. is supported by the Ministry of Science and Technology Taiwan, under the Columbus Program: MOST-109-2636-M-009-006.

\bibliographystyle{alpha}
\bibliography{ref}

\end{document}